\theoremstyle{plain}
\newtheorem{theorem}[]{Theorem}
\newtheorem{theorema}{Theorem}
\newtheorem{proposition}[theorem]{Proposition}
\newtheorem{lemma}[theorem]{Lemma}
\newtheorem{corollary}[theorem]{Corollary}
\theoremstyle{definition}
\newtheorem{definition}[theorem]{Definition}
\newtheorem{instance}[theorem]{Example}
\newtheorem{situation}[theorem]{}
\numberwithin{theorem}{section}
\numberwithin{equation}{section}
\setlist[itemize]{leftmargin=2.5em}
\title[Twisted GKZ and relative cohomology]{Twisted GKZ hypergeometric functions and relative homology}
\date{\today}
\author[Tsung-Ju~Lee]{Tsung-Ju Lee}
\email{tjlee@cmsa.fas.harvard.edu}
\address{Center of Mathematical Sciences and Applications, 20 Garden St., Cambridge, MA 02138.}
\author[Dingxin~Zhang]{Dingxin Zhang}
\email{dingxin@tsinghua.edu.cn}
\address{Yau Mathematical Sciences Center, Tsinghua University, Beijing 100084, China.}
\begin{document}
\begin{abstract}
We investigate the GKZ \(A\)-hypergeometric \(\mathscr{D}\)-modules,
introduced by Gel'fand, Kapranov, and Zelevinskii, arising from cyclic covers
of toric varieties and find its Riemann--Hilbert partner.
This extends our earlier results in
\cite{2020-Lee-Zhang-a-hypergeometric-systems-and-relative-cohomology}.
\end{abstract}
\maketitle
\tableofcontents
\section{Introduction}
A GKZ \(A\)-hypergeometric system (or a GKZ \(A\)-hypergeometric
\(\mathscr{D}\)-module), introduced by
Gel'fand, Graev, Kapranov, and Zelevinskii \cites{1987-Gelfand-Graev-Zelevinski-holonomic-systems-of-equations-and-series-of-hypergeometric-type,1989-Gelfand-Kapranov-Zelevinski-hypergeometric-functions-and-toral-manifolds}, is a
system of linear partial differential equations generalizing
the hypergeometric structure which can be traced back to Euler and Gauss.
The inputs of the system are an integral matrix \(A\in\mathrm{Mat}_{d\times m}(\mathbb{Z})\)
together with a parameter \(\beta\in\mathbb{C}^{d}\) and the output
is a system of partial differential equations on \(\mathbb{C}^{m}\).
GKZ \(A\)-hypergeometric \(\mathscr{D}\)-modules appear in various branches
of mathematics and its solution has found a variety of applications in number theory,
algebraic geometry as well as mirror symmetry.

The recent work of Hosono, Lian, Takagi, and Yau
\cites{2020-Hosono-Lian-Takagi-Yau-k3-surfaces-from-configurations-of-six-lines-in-p2-and-mirror-symmetry-i,2019-Hosono-Lian-Yau-k3-surfaces-from-configurations-of-six-lines-in-p2-and-mirror-symmetry-ii-lambda-k3-functions} shed light on mirror symmetry for
singular Calabi--Yau varieties and drew our attention to
periods for \emph{cyclic covers} of toric varieties.
It can be checked that the periods for equisingular
families of cyclic covers of toric varieties
are also governed by a certain
type of GKZ \(A\)-hypergeometric system.
One of the most important features is
that the parameter \(\beta\) is no longer an integral vector. Instead,
it is a \(\mathbb{Q}\)-vector.
This distinguishes cyclic covers from classical complete intersections
in toric varieties.
The purpose of this paper is to give a
cohomological description
of the solution space to such a GKZ
\(A\)-hypergeometric \(\mathscr{D}\)-module
under Riemann--Hilbert correspondence.

Our main result can be applied to more general GKZ
\(A\)-hypergeometric systems, not limited to the ones from
cyclic covers of toric manifolds.
Nonetheless, to give a more concise statement, we
will state our results in a slightly restricted form in a special situation.

Let \(X\) be a projective smooth toric variety over \(\mathbb{C}\) and
\(\mathcal{L}^{-1}\) be
a big and numerically effective line bundle over \(X\).
Any \(s\in\mathrm{H}^{0}(X,\mathcal{L}^{-k})\)
gives rise to an \(k\)-fold cyclic cover of \(X\) branched
over \(\{s=0\}\) by the fibred square
\begin{equation*}
\begin{tikzcd}
&Y\ar[r]\ar[d] & \mathbb{L}^{-1}\ar[d]\\
&X\ar[r,"s"] &\mathbb{L}^{-k}.
\end{tikzcd}
\end{equation*}
Let \((t_{1},\ldots,t_{n})\) be the coordinate on the maximal torus of \(X\).
Denote by \(\{t^{w_{1}},\ldots,t^{w_{m}}\}\) the integral points
in the divisor polytope of \(\mathcal{L}^{-k}\).
We further assume that \(\mathbf{0}\) is an interior
point in the divisor polytope.
The universal section \(\sigma=\sum_{i=1}^{m} x_{i} t^{w_{i}}\)
gives rise to the universal family of cyclic covers
\(\mathcal{Y}\to \mathrm{H}^{0}(X,\mathcal{L}^{-k})\) whose
``period integrals'' on the maximal torus in \(X\) are of the form
\begin{equation*}
\int \sigma^{1/k-1}\frac{\mathrm{d}t_{1}}{t_{1}}\wedge\cdots\wedge
\frac{\mathrm{d}t_{n}}{t_{n}}.
\end{equation*}

The appearance shows that they are governed by a GKZ \(A\)-hypergeometric
\(\mathscr{D}_{\mathbb{C}^{m}}\)-module \(\mathcal{M}_{A}^{\beta}\) in variables \(x_{i}\).
Here \(\mathscr{D}_{\mathbb{C}^{m}}=\mathbb{C}[x_{i},\partial_{x_{j}}]\) is
the Weyl algebra on \(\mathbb{C}^{m}\).
Under this circumstance, our main result can be stated as follows.
\begin{theorema}
\label{thm:main-theorem-intro}
For \(b\in \mathrm{H}^{0}(X,\mathcal{L}^{-k})\), we have
\begin{equation*}
\operatorname{Sol}^{0}(\mathcal{M}_{A,\beta})_{b}
\cong \mathrm{H}_{n}(U_{b},U_{b}\cap D_{\infty},\mathscr{L}_{\beta,b}).
\end{equation*}
Here \(U_{b}=X\setminus \{b=0\}\), \(D_{\infty}\) is the union
of all toric divisors, \(\mathscr{L}_{\beta,b}\)
is the local system on \(U_{b}\) whose monodromy exponent around \(\{b=0\}\)
is \(1/k-1\), and \(\operatorname{Sol}^{0}(-)=
R^{0}\mathcal{H}om_{\mathscr{D}_{\mathbb{C}^{m}}^{\mathrm{an}}}
((-)^{\mathrm{an}},\mathscr{O}_{\mathbb{C}^{m}}^{\mathrm{an}})\) is the
classical solution functor.
\end{theorema}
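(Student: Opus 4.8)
The plan is to exhibit the asserted isomorphism as a \emph{period pairing}. Let $T\subset X$ be the maximal torus and $\Delta$ the divisor polytope of $\mathcal{L}^{-k}$, so that $\{t^{w_{1}},\dots,t^{w_{m}}\}$ are its lattice points and, by hypothesis, $\mathbf 0$ lies in the interior of $\Delta$. On the $\mathscr{D}$-module side I would identify $\operatorname{Sol}^{0}(\mathcal{M}_{A,\beta})_{b}$ with a twisted de Rham cohomology group built from the universal integrand
\[
\omega_{x}=\sigma^{1/k-1}\,\frac{\mathrm{d}t_{1}}{t_{1}}\wedge\cdots\wedge\frac{\mathrm{d}t_{n}}{t_{n}},\qquad \sigma=\sum_{i=1}^{m}x_{i}t^{w_{i}};
\]
on the topological side I would pair that group against $\mathrm{H}_{n}(U_{b},U_{b}\cap D_{\infty},\mathscr{L}_{\beta,b})$ by integration. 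The upshot will be that $\operatorname{Sol}^{0}$ and the relative homology are the two sides of one perfect duality.

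First I would make the integral representation precise. Fix a twisted relative $n$-cycle $\gamma$ on $(U_{b},U_{b}\cap D_{\infty})$ loaded with a flat section of $\mathscr{L}_{\beta,b}$; near $D_{\infty}$ one regularizes $\gamma$ by the usual Pochhammer / loaded-cell construction, which is legitimate because $\tfrac{\mathrm{d}t_{1}}{t_{1}}\wedge\cdots\wedge\tfrac{\mathrm{d}t_{n}}{t_{n}}$ has only logarithmic poles along $D_{\infty}$. Then $x\mapsto\int_{\gamma}\omega_{x}$ is holomorphic near $b$; it is annihilated by the box operators of $\mathcal{M}_{A,\beta}$ because $\partial_{x_{i}}\sigma=t^{w_{i}}$, and by the Euler operators because $\tfrac{\mathrm{d}t}{t}$ is torus-invariant and the boundary terms produced by Stokes' theorem — along $\{b=0\}$ and along $D_{\infty}$ — vanish exactly because $\gamma$ is a \emph{relative} twisted cycle. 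This is the step where the $\mathbb{Q}$-parameter, i.e.\ the nontriviality of $\mathscr{L}_{\beta,b}$, genuinely intervenes; for $\beta$ integral it specializes to the computation of \cite{2020-Lee-Zhang-a-hypergeometric-systems-and-relative-cohomology}. One thus obtains a $\mathbb{C}$-linear map
\[
\mathrm{P}_{b}\colon\ \mathrm{H}_{n}(U_{b},U_{b}\cap D_{\infty},\mathscr{L}_{\beta,b})\ \longrightarrow\ \operatorname{Sol}^{0}(\mathcal{M}_{A,\beta})_{b}.
\]

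Next I would prove that $\mathrm{P}_{b}$ is an isomorphism from two complementary inputs. \emph{(de Rham side.)} From the Euler--Koszul presentation of $\mathcal{M}_{A,\beta}$ together with the holonomicity and $b$-function estimates available for GKZ systems, and a Griffiths--Dwork / toric-residue reduction, one shows that $\operatorname{Sol}^{0}(\mathcal{M}_{A,\beta})_{b}$ is the $\mathbb{C}$-linear dual of the twisted de Rham cohomology group generated by the logarithmic $n$-forms $\partial^{v}\omega_{x}\big|_{x=b}$, $v\in\mathbb{Z}_{\ge0}^{m}$. \emph{(Duality.)} Since $1/k-1\notin\mathbb{Z}$, the local system $\mathscr{L}_{\beta,b}^{\pm1}$ is generic enough that the twisted (co)homology of $(U_{b},U_{b}\cap D_{\infty})$ is concentrated in degree $n$ and twisted Poincar\'e--Lefschetz duality makes the integration pairing between that de Rham group and $\mathrm{H}_{n}(U_{b},U_{b}\cap D_{\infty},\mathscr{L}_{\beta,b})$ perfect; under the previous identification this pairing \emph{is} $\mathrm{P}_{b}$, so $\mathrm{P}_{b}$ is an isomorphism. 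As a numerical check both sides have dimension $n!\operatorname{vol}(\Delta)$ — the left by Adolphson's rank formula, applicable because $\beta$ is nonresonant ($1/k-1\notin\mathbb{Z}$ and $\mathbf 0$ interior to $\Delta$), the right by the combinatorial Euler-characteristic computation already used in the integral-parameter case.

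I expect the main obstacle to be the duality input at an \emph{arbitrary} $b$: one must control the relative twisted cohomology of $(U_{b},U_{b}\cap D_{\infty})$ with a $\mathbb{Q}$-coefficient local system uniformly, and reconcile the Pochhammer regularization of cycles meeting $D_{\infty}$ with the de Rham--Betti comparison in the relative, twisted setting. A clean way to discharge this — and the one in the spirit of the cyclic-cover viewpoint of the paper — is to pull everything back along the $k$-fold cyclic cover $Y\to X$: there $\sigma^{1/k}$ becomes single-valued, $\mathscr{L}_{\beta,b}$ trivializes, and both $\operatorname{Sol}^{0}(\mathcal{M}_{A,\beta})_{b}$ and $\mathrm{H}_{n}(U_{b},U_{b}\cap D_{\infty},\mathscr{L}_{\beta,b})$ appear as the same $\zeta_{k}$-isotypic piece of objects to which the integral-parameter results of \cite{2020-Lee-Zhang-a-hypergeometric-systems-and-relative-cohomology} apply; verifying that the $\mathbb{Z}/k$-eigenspace decompositions on the $\mathscr{D}$-module side and on the topological side match under $\mathrm{P}_{b}$ is then the principal remaining task.
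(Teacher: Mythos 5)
Your proposal takes a genuinely different route from the paper's: a direct period-pairing argument in the style of Gel'fand--Kapranov--Zelevinskii and Aomoto. The paper never constructs an explicit pairing; it identifies \(\mathcal{M}_{A,\beta}\) with the Fourier--Laplace transform of \(\tau_{!}\mathscr{O}^{\beta}\) via Reichelt's theorem (valid under semi-nonresonance), computes that transform as an exponentially twisted pushforward along the total space of the line bundles \(\mathbb{L}_{k}\to X\), proves regularity and identifies the Riemann--Hilbert partner by an explicit local cokernel computation, resolves the \(!\)-extension by a Mayer--Vietoris complex along the toric boundary, and finishes with Verdier duality. Unfortunately, your argument leans on hypotheses that fail in exactly the regime the theorem is designed to cover.

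Concretely, two steps break. First, the duality step: the exponents of \(\mathscr{L}_{\beta,b}\) around the components of \(D_{\infty}\) are \emph{integral} --- that is precisely why the local system extends across \(D_{\infty}\) and why the homology is taken relative to it --- so the Aomoto/Esnault--Schechtman--Viehweg-type genericity underlying concentration in degree \(n\) and perfectness of the twisted Poincar\'e--Lefschetz pairing is not available along the toric boundary. Moreover \(\beta\) need not be non-resonant in the GKZ sense: the paper's second and third examples after the main theorem have resonant \(\beta=(-1/2,0)\), with nonvanishing \(\mathrm{H}_{0}\) of the boundary feeding into the relative \(\mathrm{H}_{1}\); so the Euler-integral completeness theorem you implicitly invoke on the de Rham side does not apply. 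Second, the ``numerical check'' via Adolphson's rank formula holds only for generic (nondegenerate) \(b\), whereas the theorem asserts the isomorphism for \emph{every} \(b\in\mathrm{H}^{0}(X,\mathcal{L}^{-k})\); rank-jumping at special \(b\) means a dimension count cannot substitute for a functorial identification. The deeper issue behind both symptoms is the distinction between \(\tau_{!}\) and \(\tau_{+}\): under mere semi-nonresonance one has \(\mathcal{M}_{A,\beta}=\mathrm{FT}(\tau_{!}\mathscr{O}^{\beta})\), and it is the \(!\)-extension across the divisors with integral exponent that produces the \emph{relative} homology, while a de Rham model ``generated by the logarithmic forms \(\partial^{v}\omega_{x}\)'' computes the \(+\)-extension and does not see this difference. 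Your closing suggestion --- pass to the \(k\)-fold cover and take \(\zeta_{k}\)-isotypic pieces --- is a plausible repair on the topological side, but matching it with an eigenspace decomposition of the GKZ module itself (whose \(A\)-matrix does not decompose) is the substantive missing step, as you acknowledge.
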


As we have mentioned, our result (cf.~Theorem \ref{thm:main}) is more general; we may
allow \(\mathcal{L}^{-1}\) to be
a split vector bundle (i.e.~a product of line bundles) and the
exponent \(\beta\)
can be an arbitrary \(\mathbb{Q}\)-vector as long as it remains
\emph{semi-nonresonant} in the sense of Mutsumi~Saito
\cite{2001-Saito-isomorphic-classes-of-a-hypergeometric-systems}.

We remark that
the Riemann--Hilbert problem of GKZ \(A\)-hypergeometric
systems was settled by Gel'fand \emph{et al}.~when
\(\beta\) is \emph{non-resonant}
\cite{1990-Gelfand-Kapranov-Zelevinsky-generalized-euler-integrals-and-a-hypergeometric-functions},
which has been used by the first author to prove
the completeness of the GKZ \(A\)-hypergeometric system
arising from periods for Calabi--Yau double covers of
toric manifolds
\cite{2022-Lee-a-note-on-periods-of-calabi-yau-fractional-complete-intersections}.
Besides cyclic covers of toric manifolds, one can also consider
cyclic covers of homogeneous spaces and
use tautological systems to study their periods.
Recall that tautological \(\mathscr{D}\)-modules
are introduced by Lian \emph{et al}.~\cite{2013-Lian-Song-Yau-periodic-integrals-and-tautological-systems}
to tackle the periods for Calabi--Yau
hypersurfaces or complete intersections
in homogeneous spaces.
In the case of cyclic covers of homogeneous spaces, one should replace
the GKZ \(A\)-hypergeometric system by
a tautological system with a fractional exponent.
This has been studied by G\"{o}rlach \emph{et al}.~in their recent work
\cite{2022-Gorlach-Reichelt-Sevenheck-Steiner-Walther-tautological-systems-homogeneous-spaces-and-the-holonomic-rank-problem}.

The proof of Theorem \ref{thm:main} relies heavily on the
result of Reichelt
\cite{2014-Reichelt-laurent-polynomials-gkz-hypergeometric-systems-and-mixed-hodge-modules}.
For non-semiresonant \(\beta\),
the GKZ \(A\)-hypergeometric \(\mathscr{D}\)-module \(\mathcal{M}_{A}^{\beta}\)
can be identified with a complex of holonomic \(\mathscr{D}\)-modules under
Fourier--Laplace transform, i.e.~\(\mathcal{M}_{A}^{\beta}=\mathrm{FT}(\mathcal{N})\).
The remaining task is to
compute \(\mathrm{FT}(\mathcal{N})\) explicitly and give it a topological meaning.
  The computations are performed in Sections~\ref{sec:regularity} and \ref{sec:push},
  where we exhibit its regularity and relate it to toric geometry.
  It is worth noting that the Fourier--Laplace transform is closely related
  to exponentially twisted \(\mathscr{D}\)-modules, which are generally irregular.
  It is the homogeneity condition of the GKZ \(A\)-hypergeometric system that ensures the regularity of the
  final output.

  Exponential twists of integrable connections are an
  algebraic recipe for computing vanishing cycles in the theory of
  \(\mathscr{D}\)-modules.  The main idea is that the exponentially
  twisted cohomology should account for the ``shapes'' of the critical
  points of the function (``stationary phase approximation'').  
  The structure of critical values of the function that appear in the 
  twisted GKZ system is simple, so we may a direct computation 
  in Section~\ref{sec:regularity}.

\subsection*{Acknowledgement}
Part of the results in this paper was presented in the conference
entitled \emph{Calabi--Yau manifolds and mirror symmetry - Past, Present, and Future -}
held in Gakushiun University in Japan
in August 2022. We thank the organizers for the invitation.
Tsung-Ju Lee is partially supported by the Simons Collaboration Grant on Homological
Mirror Symmetry and Applications 2015--2022.
Dingxin Zhang is partially supported by the national key
research and development program of China (No.~2022YFA1007100).

\section{GKZ \texorpdfstring{\(A\)}{A}-hypergeometric systems}
\label{sec:gkz-system}
In this section, we recall the
definition of GKZ \(A\)-hypergeometric
systems and
give the precise statement of our main theorem.
\begin{situation}
\label{situation:a-hypergeometric-notation}
We begin with the definition of GKZ
\(A\)-hypergeometric systems. Fix a positive integer \(r\ge 1\).
\begin{enumerate}
  \item Let \(V_{i} = \mathbb{C}^{m_{i}}\) be a complex vector space of dimension
    \(m_{i}\) for each \(1\le i\le r\). Put \(m=m_{1}+\cdots+m_{r}\)
    and \(V=V_{1}\times\cdots\times V_{r}\).
  \item Let \(x_{i,1},\ldots, x_{i,m_{i}}\) be a fixed coordinate system on the
    \emph{dual} vector space \({V}_{i}^{\vee}\). We put
    \(\partial_{i,j}=\partial/\partial x_{i,j}\).
  \item For each \(1\le k\le r\), let \(A_{k}\) be an integral matrix of the form
    \begin{equation*}
      A_{k}= (a^{k}_{i,j}) =
	\begin{bmatrix}
	e_{k} & \cdots & e_{k}\\
	\vline height 1ex & & \vline height 1ex\\
	w_{k,1} & \cdots & w_{k,m_{k}}\\
	\vline height 1ex & & \vline height 1ex
      \end{bmatrix}\in\mathrm{Mat}_{(r+n)\times m_{i}}(\mathbb{Z})
    \end{equation*}
    where \(e_{k}=(\delta_{k,1},\ldots,\delta_{k,r})^{\intercal}\).
    Let
    \begin{equation*}
      A=
      \begin{bmatrix}
      A_{1} & \cdots & A_{r}
      \end{bmatrix}.
    \end{equation*}
    We also assume that \(A\) has full rank
    and the columns of \(A\) generate \(\mathbb{Z}^{r+n}\) as
    an abelian group.
    The matrix \(A\) is \emph{homogeneous} in the sense of
    Gel'fand--Kapranov--Zelevinskii~\cite{1989-Gelfand-Kapranov-Zelevinski-hypergeometric-functions-and-toral-manifolds}.
  \item Let
    \((\mathbb{C}^{\ast})^{r}\times T=
    \{(s,t)=(s_{1},\ldots,s_{r},t_1,\ldots,t_{n})~|~s_{i},t_{j}
    \in \mathbb{C}^{\ast}\}\) be an algebraic
    torus of dimension \(r+n\).
  \item Let \(\tau_{k}\colon (\mathbb{C}^{\ast})^{r}\times T \to V_{k}\)
  be the morphism defined by \(A_{k}\)
    \begin{equation*}
    \tau_{k}\colon  (s,t) \mapsto (s_{k}\cdot t^{w_{k,1}}, \ldots, s_{k}\cdot t^{w_{k,m_{k}}})
    \end{equation*}
    and \(\bar{\tau}_{k}\) be the composition
    \((\mathbb{C}^{\ast})^{r} \times T \to V_{k}\setminus \{0\}\to \mathbf{P}V_k\).
    Let \(\tau=(\tau_1,\ldots,\tau_r)\).
    and \(\bar{\tau}=(\bar{\tau}_1,\ldots,\bar{\tau}_r)\).
    Notice that \(\tau\) is \emph{injective} under our assumption on \(A\).
   \item Let \(X''\) be the Zariski closure of the image of \(\bar{\tau}\).
    Then \(X'\) is a toric variety (possibly non-normal) with a
    maximal torus \(T'=\operatorname{Im}(\bar{\tau})(\cong T)\).
    Let \(X\to X'\) be any toric resolution.
  \item Let \(\mathcal{L}_{k}^{-1}\) be the pullback
  of \(\mathscr{O}(1)\) on \(\mathbf{P}V_{k}\) along \(X\to X'\).
  Note that the line bundle
  \(\mathcal{L}_{k}^{-1}\) is equipped with a \(T\)-linearization
  such that its divisor polytope \(\Delta_{k}\) is the convex hull of
  \(\{w_{k,1},\ldots,w_{k,m_{k}}\}\) and
  \(V_{k}^{\vee}\subset \mathrm{H}^{0}(X,\mathcal{L}_{k}^{-1})\)
  is a basepoint free linear system generated by \(\{t^{w_{k,j}}~|~1\le j\le m_{k}\}\).
 \end{enumerate}
  Given a parameter \(\beta \in \mathbb{C}^{r+n}\),
  the \(A\)-hypergeometric ideal \(\mathscr{I}_{A,\beta}\)
  is the left ideal of the Weyl algebra \(\mathscr{D}=\mathbb{C}[x,\partial]\) on
  the \emph{dual} vector space \(V^{\vee}\) generated by the following two types of
  operators.

\begin{itemize}
\itemsep=3pt
\item The \emph{box operators}: \(\partial^{\nu_+} - \partial^{\nu_{-}}\),
where \(\nu_{\pm}\in \mathbb{Z}_{\geq 0}^{m}\) satisfy \(A\nu_+=A\nu_{-}\);
\item The \emph{Euler operators}: \(\sum_{j,k} a^{k}_{i,j}
x_{i,j}\partial_{i,j}-\beta_{i}\) for \(i=1,\ldots,r+n\).
\end{itemize}
The \(A\)-hypergeometric system \(\mathcal{M}_{A,\beta}\)
is the cyclic \(\mathscr{D}\)-module
\begin{equation*}
\mathcal{M}_{A,\beta} = \mathscr{D}/\mathscr{I}_{A,\beta}.
\end{equation*}
\end{situation}
In this paper, we will mainly deal with the case when
\begin{equation}
\label{assumption:beta}
\beta  = (\beta_{1},\ldots,\beta_{r},
0,\ldots,0)~\mbox{with}~\beta_{i}\in\mathbb{Q}\setminus\mathbb{Z}.
\end{equation}

\begin{situation}
  \label{situation:hypothesis}
  \textbf{Hypothesis.}~We assume that
  \(-\beta\) lies in the \emph{interior} of the cone generated by
  column vectors of \(A\).
  This implies that \(\beta\) is \emph{semi-nonresonant} (cf.~\cite{2020-Lee-Zhang-a-hypergeometric-systems-and-relative-cohomology}*{\S2}).
\end{situation}

\begin{situation}
\label{sit:toric-main-theorem}
Let \(\Sigma\) be the fan defining \(X\) in item (6).
Denote by \(D_{\rho}\) the toric divisor given by
the one cone \(\rho\in\Sigma(1)\). For each \(k\) we may express the
invertible sheaf \(\mathcal{L}_{k}^{-1}\) as
\begin{equation}
\textstyle
\mathcal{L}_{k}^{-1} = \mathscr{O}_{X}(\sum_{\rho\in \Sigma(1)} a_{\rho,k} D_{\rho}),~
a_{\rho,k}\in\mathbb{Z},
\end{equation}
as a \(T\)-linearized line bundle according to item (7).

For a parameter \(\beta\) and the integers \(a_{\rho,k}\)
as above, put
\begin{equation}
I:=\left\{\rho\in \Sigma(1)~\left|~ \sum_{k=1}^{r} a_{\rho,k}\beta_{k} \in\mathbb{Z}\right.\right\}.
\end{equation}
Let \(J:=\Sigma(1)\setminus I\) be the complement. Finally, let
\(X(I)=X\setminus \cup_{j\in J} D_{j}\).
\end{situation}

We are now ready to state our main result in this paper.
\begin{theorem}
\label{thm:main}
Given an integral matrix \(A\) as in \S\ref{situation:a-hypergeometric-notation}
and a parameter \(\beta\) in
\eqref{assumption:beta}, under the hypothesis \S\ref{situation:hypothesis}
and notation in \S\ref{sit:toric-main-theorem},
we have for \(b\in V^{\vee}\)
\begin{equation}
\label{eq:main-theorem}
\operatorname{Sol}^{0}(\mathcal{M}_{A,\beta})_{b}
\cong \mathrm{H}_{n}(U_{b},U_{b}\cap (\cup_{i\in I} D_{i}),\mathscr{L}_{\beta,b}).
\end{equation}
A few explanations are in order.
\begin{itemize}
\item For \(b=(b_{1},\ldots,b_{r})\), put
\(U_{b}=X(I)\setminus \cup_{i=1}^{r} \{b_{i}=0\}\).
\item \(\mathscr{L}_{\beta,b}\) is the local system on \(U_{b}\) having
monodromy exponent \(\beta_{i}\) around \(\{b_{i}=0\}\).
This can be constructed as follows.
Let \(f_{i}\) be a meromorphic section of \(\mathcal{L}_{i}^{-1}\)
such that \(\operatorname{div}(f_{i})=\sum_{\rho} a_{\rho,i}D_{\rho}\).
Consider the regular functions
\begin{equation*}
r_{i}:=\frac{b_{i}}{f_{i}}
\colon T\rightarrow \mathbb{C}^{\ast}.
\end{equation*}
Denote by \(\mathscr{L}_{i}\) the local system on \(\mathbb{C}^{\ast}\)
having monodromy exponent \(\beta_{i}\) around \(0\). Then their tensor product
\begin{equation*}
\bigotimes_{i=1}^{r} r_{i}^{\ast} \mathscr{L}_{i}
\end{equation*}
is a local system on \(T\). One can check that it can be extended
across \(D_{\rho}\) for \(\rho\in I\). This defines the local system \(\mathscr{L}_{\beta,b}\).

\item \(\mathrm{Sol}^{0}(-)=R^{0}\mathcal{H}om_{\mathscr{D}_{V^{\vee}}^{\mathrm{an}}}
((-)^{\mathrm{an}},\mathscr{O}_{V^{\vee}}^{\mathrm{an}})\)
is the classical solution functor and the subscript \(b\)
denotes the stalk at \(b\in V^{\vee}\).
\end{itemize}
\end{theorem}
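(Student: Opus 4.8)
The plan is to follow the Fourier--Laplace route indicated in the introduction: realize \(\mathcal{M}_{A,\beta}\) as the Fourier--Laplace transform of an explicit holonomic module, describe \(\operatorname{Sol}^{0}\) of such a transform as a twisted rapid-decay homology group, and then identify that group --- through toric geometry --- with the relative homology on the right-hand side of \eqref{eq:main-theorem}; the regularity assertion and the toric bookkeeping are precisely what Sections~\ref{sec:regularity} and~\ref{sec:push} carry out. For the first step, note that under the hypothesis \S\ref{situation:hypothesis} the parameter \(\beta\) is semi-nonresonant, so by a theorem of Reichelt~\cite{2014-Reichelt-laurent-polynomials-gkz-hypergeometric-systems-and-mixed-hodge-modules} (see also \cite{2020-Lee-Zhang-a-hypergeometric-systems-and-relative-cohomology}*{\S2}) one may write \(\mathcal{M}_{A,\beta}\cong\mathrm{FT}(\mathcal{N})\), where \(\mathrm{FT}\) is the Fourier--Laplace transform along \(V^{\vee}\) and \(\mathcal{N}\) is, up to a shift, the \(\mathscr{D}_{V}\)-module direct image along the locally closed immersion \(\tau\colon(\mathbb{C}^{\ast})^{r}\times T\hookrightarrow V\) of a rank-one integrable connection on the torus whose exponent in the \(s_{k}\)-direction represents \(\beta_{k}\) modulo \(\mathbb{Z}\). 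Semi-nonresonance is what makes this identification hold on the nose, rather than only after passing to cohomology or modulo the box and Euler relations.

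Next I would transport \(\operatorname{Sol}^{0}\) through \(\mathrm{FT}\). The solutions of a Fourier--Laplace transform are computed by integration against the exponential kernel, so, up to the standard degree shift,
\[
\operatorname{Sol}^{0}(\mathcal{M}_{A,\beta})_{b}\;\cong\;\mathrm{H}^{\mathrm{rd}}_{r+n}\bigl((\mathbb{C}^{\ast})^{r}\times T,\ \mathcal{N}\otimes\mathcal{E}^{\langle\tau(s,t),b\rangle}\bigr),
\]
the rapid-decay twisted homology of the torus (with \(\mathcal{N}\) pulled back along \(\tau\)) in top degree, where \(\langle\tau(s,t),b\rangle=\sum_{k=1}^{r}s_{k}g_{k,b}(t)\) and \(g_{k,b}(t)=\sum_{j}b_{k,j}t^{w_{k,j}}\). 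Although a general exponential twist is irregular, the homogeneity of \(A\) makes \(\langle\tau(s,t),b\rangle\) homogeneous of degree one in \(s\), so at any critical point \(g_{k,b}(t)=0\) for every \(k\) and the only critical value is \(0\); this ``single critical value'' is what makes the direct computation of \S\ref{sec:regularity} feasible. I would then integrate out the \(s_{k}\) by the Fubini principle for rapid-decay homology: for fixed \(t\) with \(g_{k,b}(t)\neq0\) the twisted homology of \(\mathbb{C}^{\ast}_{s_{k}}\) with the rank-one connection above and kernel \(e^{s_{k}g_{k,b}(t)}\) is one-dimensional, concentrated in degree one (here \(\beta_{k}\notin\mathbb{Z}\) is essential), the one-variable Mellin transform \(\int s^{\alpha-1}e^{-su}\,\mathrm{d}s=\Gamma(\alpha)u^{-\alpha}\) identifies the fibre integral with \(g_{k,b}(t)^{\pm\beta_{k}}\) up to a \(\Gamma\)-factor and an integer shift of the exponent, and over \(\{g_{k,b}=0\}\) the fibre homology vanishes. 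Thus the exponential kernel disappears, \(\{b_{i}=0\}\) becomes a removed locus, and the reduced coefficient system on \(T\setminus\bigcup_{i}\{g_{i,b}=0\}\) is a copy of the \emph{regular} rank-one connection \(\mathscr{L}_{\beta,b}\); the output is now regular, as promised.

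Finally I would compactify \(T\subset X\) and determine the locus along which a reduced cycle may run to the boundary. Near a toric divisor \(D_{\rho}\) a flat section of \(\mathscr{L}_{\beta,b}\) behaves like \(z_{\rho}^{\pm\sum_{k}a_{\rho,k}\beta_{k}}\) for a local equation \(z_{\rho}\) of \(D_{\rho}\); this is single-valued exactly when \(\sum_{k}a_{\rho,k}\beta_{k}\in\mathbb{Z}\), i.e.\ exactly when \(\rho\in I\), which is also where \(\mathscr{L}_{\beta,b}\) extends across \(D_{\rho}\). Hence a cycle may end along \(\bigcup_{i\in I}D_{i}\) but must avoid the branch loci \(\bigcup_{j\in J}D_{j}\) and \(\bigcup_{i}\{b_{i}=0\}\) --- which is precisely why \(X(I)\) and \(U_{b}\) appear --- and for a regular connection this moderate-growth homology is the classical relative twisted homology. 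Therefore
\[
\operatorname{Sol}^{0}(\mathcal{M}_{A,\beta})_{b}\;\cong\;\mathrm{H}_{n}\bigl(U_{b},\ U_{b}\cap{\textstyle\bigcup_{i\in I}}D_{i},\ \mathscr{L}_{\beta,b}\bigr),
\]
which is \eqref{eq:main-theorem}.

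I expect the technical heart to lie in the last two steps. The Fubini reduction must be carried out uniformly in \(t\), in particular over the hypersurfaces \(\{g_{k,b}=0\}\) carrying the critical points of \(\sum_{k}s_{k}g_{k,b}(t)\); this is exactly where the single critical value \(0\) --- a consequence of the homogeneity of \(A\) --- is used to control the \(\mathscr{D}_{T}\)-module obtained after integrating out \(s\), and where the regularity of the output must be proved. In the toric step one must pin down precisely which boundary divisors are deleted (those in \(J\)) and which serve as relative boundary (those in \(I\)), and verify that the extension produced by the transform is the logarithmic one along \(\bigcup_{i\in I}D_{i}\), which is sensitive to whether the direct image in the first step is of \(\ast\)-type or \(!\)-type. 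There is also the subtlety that the comparison between rapid-decay homology and relative topological homology, and the transversality of \(\bigcup_{i}\{b_{i}=0\}\) with the toric boundary, are cleanest for \(b\) in a dense open subset of \(V^{\vee}\); extending \eqref{eq:main-theorem} to every \(b\in V^{\vee}\) requires an additional argument, for instance treating both sides as stalks of constructible sheaves on \(V^{\vee}\) whose formation commutes with specialization.
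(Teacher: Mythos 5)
Your outline follows the same architecture as the paper's proof --- Reichelt's identification \(\mathcal{M}_{A,\beta}\cong\mathrm{FT}(\tau_{!}\mathscr{O}^{\beta})\), reduction of the Fourier--Laplace transform to an exponential twist by the pairing \(f=\sum_k s_k g_k\), integration over the fibre directions \(s_k\) producing a \(\Gamma\)-factor and the regular rank-one connection with exponent \(\beta_k\) along \(\{g_k=0\}\), and finally the toric computation showing the monodromy exponent around \(D_\rho\) is \(\sum_k a_{\rho,k}\beta_k\), so that exactly the divisors in \(I\) become relative boundary. The difference is one of duality of language: you work on the solution side with rapid-decay homology and a Fubini principle, while the paper stays on the \(\mathscr{D}\)-module side throughout, computing \(\pi_+\bigl(\mathrm{pr}^{\ast}j_{!}\mathscr{O}^{\beta}\otimes\exp(sg)\bigr)\) as the explicit two-term complex \(D_f=\partial_s+g\) on \(\mathscr{O}[s,\partial_s]/(s\partial_s-\beta+1)\) and identifying its cokernel with the meromorphic connection \(u\mapsto \mathrm{d}u+\beta\,\mathrm{d}g/g\wedge u\) via the functional \(\mathfrak{L}^{\beta}\); only at the very end does it pass to the topological side by \(\mathrm{dR}\), Verdier duality, and taking stalks.

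The two places you flag as the technical heart are precisely where the paper's method pays off, and where your sketch as written is still a gap. First, the uniform-in-\(t\) control over \(\{g_{k,b}=0\}\) is exactly the content of the local cokernel analysis (the \(\mathscr{O}\to\mathscr{K}\to\mathscr{B}\) ladder and the vanishing of \(\mathrm{Coker}(D_{\mathscr{B}})\)), which proves regularity and pins down the RH partner as \(R\rho_{\ast}\mathscr{L}_{\beta}^{\vee}\) with no case-by-case appeal to stationary phase. Second, the mechanism that actually produces the \emph{relative} pair is not just that ``cycles may end on \(\cup_{i\in I}D_i\)'': it is that \(\tau_{!}\) is a \(!\)-extension, and across the divisors in \(I\) (integral exponent) one has \(\gamma_{!}\neq\gamma_{+}\); the paper resolves \(\gamma_{!}\mathscr{O}^{\beta}\) by the Mayer--Vietoris complex \(\mathscr{O}^{\beta}\to[i_{1+}i_1^{+}\to\cdots\to i_{p+}i_p^{+}]\), and it is the cone of this restriction map, after dualizing, that yields \(\mathrm{H}_n(U_b,U_b\cap\cup_{i\in I}D_i,\mathscr{L}_{\beta,b})\). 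Your last worry --- extending from generic to all \(b\) --- dissolves in the paper's formulation, since the entire identification is carried out at the level of complexes on \(V^{\vee}\) (using properness of \(b\colon\mathbb{E}\to V\) and base change) before any stalk is taken, so no transversality of \(\{b_i=0\}\) with the toric boundary is needed. To turn your proposal into a proof you would need to supply rigorous substitutes for these two steps; as a blueprint, however, it is the right one.
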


We illustrate our main theorem by the following examples.
\begin{instance}
\label{example:1}
Let
\begin{equation*}
A = \begin{bmatrix}
1 & 1 & 1\\
0 & 1 &-1
\end{bmatrix}~\mbox{and}~\beta=
\begin{bmatrix}
-1/2\\
0
\end{bmatrix}.
\end{equation*}
One can easily check that \(A\) and \(\beta\) satisfy the assumptions
and hypothesis we made in both \S\ref{situation:a-hypergeometric-notation}
and Hypothesis \ref{situation:hypothesis}.

In the present case, \(X=\mathbb{P}^{1}\) and \(\mathcal{L}^{-1}=
\mathscr{O}_{X}(D_{0}+D_{\infty})\). Here \(D_{0}\) (resp.~\(D_{\infty}\)) is the Weil
divisor associated with the \(1\)-cone \(\mathbb{R}_{\ge 1}\) (resp.~\(\mathbb{R}_{\le 0}\)).
We have \(I=\emptyset\) and \(X(I)=X\setminus (D_{0}\cup D_{\infty})=\mathbb{C}^{\ast}\)
and therefore for \(b\in V^{\vee}=
\mathrm{H}^{0}(X,\mathscr{O}_{X}(D_{0}+D_{\infty}))\)
\begin{equation}
\operatorname{Sol}^{0}(\mathcal{M}_{A,\beta})_{b}
\cong \mathrm{H}_{1}(U_{b},\mathscr{L}_{\beta,b})
\end{equation}
where \(U_{b}=\mathbb{C}^{\ast}\setminus \{b=0\}\) and
\(\mathscr{L}_{\beta,b}\) is the local system having
monodromy exponent \(1/2\) around \(\{b=0\}\).
Indeed, one can check in this example \(\beta\) is indeed \emph{non-resonant}
in the sense of Gelfand--Kapranov--Zelevinsky
\cite{1990-Gelfand-Kapranov-Zelevinsky-generalized-euler-integrals-and-a-hypergeometric-functions}. The results are consistent.
\end{instance}

\begin{instance}
Let
\begin{equation*}
A = \begin{bmatrix}
1 & 1 & 1 & 1 \\
0 & 1 & 2 &-1
\end{bmatrix}~\mbox{and}~\beta=
\begin{bmatrix}
-1/2\\
0
\end{bmatrix}.
\end{equation*}
One can checks that \(A\) and \(\beta\) satisfy the assumptions
and hypothesis we made in both \S\ref{situation:a-hypergeometric-notation}
and Hypothesis \ref{situation:hypothesis}.
Notice that \(\beta\) is \emph{not} non-resonant in this case.

In the present case, \(X=\mathbb{P}^{1}\) and \(\mathcal{L}^{-1}=
\mathscr{O}_{X}(D_{0}+2D_{\infty})\). (Both \(D_{0}\) and \(D_{\infty}\)
are defined in Example \ref{example:1}.)
We have \(I=\{\rho_{\infty}\}\) and \(X(I)=\mathbb{C}\) and therefore for \(b\in V^{\vee}=
\mathrm{H}^{0}(X,\mathscr{O}_{X}(D_{0}+2D_{\infty}))\)
\begin{equation}
\operatorname{Sol}^{0}(\mathcal{M}_{A,\beta})_{b}
\cong \mathrm{H}_{1}(U_{b},U_{b}\cap(D_{0}\cup D_{\infty}),\mathscr{L}_{\beta,b})
\end{equation}
where \(U_{b}=\mathbb{C}\setminus \{b=0\}\) and
\(\mathscr{L}_{\beta,b}\) is the local system having
monodromy exponent \(1/2\) around \(\{b=0\}\).

For general \(b\in V^{\vee}\), we claim
\begin{equation}
\dim \mathrm{H}_{1}(U_{b},U_{b}\cap(D_{0}\cup D_{\infty}),\mathscr{L}_{\beta,b})=3
\end{equation}
which is equal to the normalized volume of \(A\) as expected.
This can be seen from the long exact sequence of relative homology
\begin{align*}
&\mathrm{H}_{1}(U_{b}\cap(D_{0}\cup D_{\infty}),\mathscr{L}_{\beta,b})\to
\mathrm{H}_{1}(U_{b},\mathscr{L}_{\beta,b})\to
\mathrm{H}_{1}(U_{b},U_{b}\cap(D_{0}\cup D_{\infty}),\mathscr{L}_{\beta,b})\\
&\to \mathrm{H}_{0}(U_{b}\cap(D_{0}\cup D_{\infty}),\mathscr{L}_{\beta,b})\to
\mathrm{H}_{0}(U_{b},\mathscr{L}_{\beta,b})\to
\mathrm{H}_{0}(U_{b},U_{b}\cap(D_{0}\cup D_{\infty}),\mathscr{L}_{\beta,b})\to 0.
\end{align*}
For generic \(b\), the set \(U_{b}\cap (D_{0}\cup D_{\infty})\) consists of one point
which gives the vanishing of the first term and
\begin{equation}
\dim \mathrm{H}_{0}(U_{b}\cap(D_{0}\cup D_{\infty}),\mathscr{L}_{\beta,b})=1.
\end{equation}
Moreover, we have
\begin{equation*}
\dim \mathrm{H}_{1}(U_{b},\mathscr{L}_{\beta,b})=2,~\mbox{and}~
\dim \mathrm{H}_{0}(U_{b},\mathscr{L}_{\beta,b})=0
\end{equation*}
since there is at least one non-integral monodromy exponent in \(\mathscr{L}_{\beta,b}\).
\end{instance}

\begin{instance}
Let
\begin{equation*}
A = \begin{bmatrix}
1 & 1 & 1 & 1 & 1\\
0 & 1 & 2 &-1 &-2
\end{bmatrix}~\mbox{and}~\beta=
\begin{bmatrix}
-1/2\\
0
\end{bmatrix}.
\end{equation*}
One can easily check that \(A\) and \(\beta\) satisfy the assumptions
and hypothesis we made in both \S\ref{situation:a-hypergeometric-notation}
and Hypothesis \ref{situation:hypothesis}. Note that
\(\beta\) is also \emph{not} non-resonant in this case.

In the present case, \(X=\mathbb{P}^{1}\) and \(\mathcal{L}^{-1}=
\mathscr{O}_{X}(2D_{0}+2D_{\infty})\). (Both \(D_{0}\) and \(D_{\infty}\)
are defined in Example \ref{example:1}.)
We have \(I=\{\rho_{0},\rho_{\infty}\}\) and \(X(I)=X\) and therefore for \(b\in V^{\vee}=
\mathrm{H}^{0}(X,\mathscr{O}_{X}(2D_{0}+2D_{\infty}))\)
\begin{equation}
\operatorname{Sol}^{0}(\mathcal{M}_{A,\beta})_{b}
\cong \mathrm{H}_{1}(U_{b},U_{b}\cap(D_{0}\cup D_{\infty}),\mathscr{L}_{\beta,b})
\end{equation}
where \(U_{b}=X\setminus \{b=0\}\) and
\(\mathscr{L}_{\beta,b}\) is the local system having
monodromy exponent \(1/2\) around \(\{b=0\}\).

One can check that for general \(b\in V^{\vee}\),
\begin{equation}
\dim \mathrm{H}_{1}(U_{b},U_{b}\cap(D_{0}\cup D_{\infty}),\mathscr{L}_{\beta,b})=4
\end{equation}
which is equal to the normalized volume of \(A\) as expected.
This can be seen from the long exact sequence of relative homology
\begin{align*}
&\mathrm{H}_{1}(U_{b}\cap(D_{0}\cup D_{\infty}),\mathscr{L}_{\beta,b})\to
\mathrm{H}_{1}(U_{b},\mathscr{L}_{\beta,b})\to
\mathrm{H}_{1}(U_{b},U_{b}\cap(D_{0}\cup D_{\infty}),\mathscr{L}_{\beta,b})\\
&\to \mathrm{H}_{0}(U_{b}\cap(D_{0}\cup D_{\infty}),\mathscr{L}_{\beta,b})\to
\mathrm{H}_{0}(U_{b},\mathscr{L}_{\beta,b})\to
\mathrm{H}_{0}(U_{b},U_{b}\cap(D_{0}\cup D_{\infty}),\mathscr{L}_{\beta,b})\to 0.
\end{align*}
For generic \(b\), the set \(U_{b}\cap (D_{0}\cup D_{\infty})\) consists of two points
which gives the vanishing of the first term and
\begin{equation}
\dim \mathrm{H}_{0}(U_{b}\cap(D_{0}\cup D_{\infty}),\mathscr{L}_{\beta,b})=2.
\end{equation}
Moreover, we have
\begin{equation*}
\dim \mathrm{H}_{1}(U_{b},\mathscr{L}_{\beta,b})=2,~\mbox{and}~
\dim \mathrm{H}_{0}(U_{b},\mathscr{L}_{\beta,b})=0
\end{equation*}
since there is at least one non-integral monodromy exponent in \(\mathscr{L}_{\beta,b}\).
\end{instance}

\section{Generalities on algebraic \texorpdfstring{\(\mathscr{D}\)}{D}-modules}
\label{sec:d-mod}
In this section, we recall some basic
notions in algebraic \(\mathscr{D}\)-modules.
  Let \(X\) be a \emph{smooth} algebraic variety and \(\mathscr{D}_X\) be the sheaf of
  algebraic differential operators on \(X\). By a \(\mathscr{D}_X\)-module on \(X\) we
  always mean a \emph{left} \(\mathscr{D}_X\)-module. Let
  \(\mathrm{D}^{b}_{\mathrm{h}}(\mathscr{D}_X)\) be the bounded derived category of
  \(\mathscr{D}\)-modules over \(X\) with \emph{holonomic} cohomology sheaves.
  Let \(\mathrm{D}^{b}_{\mathrm{rh}}(\mathscr{D}_X)\) be the derived category of
  \(\mathscr{D}_X\)-modules with \emph{regular holonomic} cohomology sheaves. One can
  define the \emph{duality functor}, denoted by
  \(\mathcal{M}\mapsto\mathbb{D}\mathcal{M}\), on
  \(\mathrm{D}_{\mathrm{h}}^b(\mathscr{D}_X)\).
  Let \(f\colon X\to Y\) be a morphism between smooth
  varieties. One can define the following functors
\begin{itemize}
\item For a complex \(\mathcal{M}\in \mathrm{D}_{\mathrm{h}}^b(\mathscr{D}_X)\), let
  \(f_+(\mathcal{M}):=Rf_\ast(\mathscr{D}_{Y\leftarrow X}\otimes^{\mathrm{L}}_{\mathscr{D}_{X}}
  \mathcal{M})\), where
  \(\mathscr{D}_{Y\leftarrow X}\) is the transfer \((f^{-1}\mathscr{D}_{Y},\mathscr{D}_{X})\)-bimodule. 
\item For a complex \(\mathcal{N}\in \mathrm{D}_{\mathrm{h}}^b(\mathscr{D}_Y)\), let
  \(f^!\mathcal{N}:=f^\ast\mathcal{N}[\dim X-\dim Y]\), where \(f^\ast\) is the
  derived pullback on the category of quasi-coherent \(\mathscr{O}_Y\)-modules.
\end{itemize}
Note that these functors can be defined on the category of
\(\mathscr{D}\)-modules without the holonomic condition. Nonetheless, all the
functors \(\mathbb{D}\), \(f_+\) and \(f^!\) preserve the holonomicity. We put
\begin{itemize}
  \item \(f^+:=\mathbb{D}_X f^!\mathbb{D}_Y\), and
  \item \(f_!:=\mathbb{D}_Y f_+\mathbb{D}_X\).
\end{itemize}
\(f^+\) is the left adjoint of \(f_+\) and \(f_!\) is the left adjoint of \(f^!\).

When \(f\) is a smooth morphism, or more generally non-characteristic with respect
to a holonomic \(\mathscr{D}\)-module \(\mathcal{M}\), we have
\(f^{\ast}\mathcal{M}=f^{!}\mathcal{M}[\dim Y-\dim X]=f^{+}\mathcal{M}[\dim
X-\dim Y]\). Finally, given a cartisian diagram
\begin{equation*}
  \begin{tikzcd}
    X' \ar[r,"g'"] \ar[d,"f'"] & X \ar[d,"f"] \\
    Y' \ar[r,"g"] & Y,
  \end{tikzcd}
\end{equation*}
with all varieties are smooth, then we have the base change formula
\begin{equation*}
  g^{!} f_+ = f'_+{ g' }^!.
\end{equation*}

For a smooth complex algebraic variety \(X\), the functor
\begin{equation}
\mathrm{dR}_{X}^{\mathrm{an}}\colon \mathrm{D}^{b}_{\mathrm{rh}}(\mathscr{D}_{X})
\to \mathrm{D}_{\mathrm{c}}^{b}(X^{\mathrm{an}}),~\mathcal{M}^{\bullet}\mapsto
\left(\omega_{X}\otimes^{\mathrm{L}}_{\mathscr{D}_{X}}\mathcal{M}^{\bullet}\right)^{\mathrm{an}}
\end{equation}
gives an equivalence of categories between the
bounded derived category of regular holonomic \(\mathscr{D}_{X}\)-modules
and the bounded derived category of
algebraically constructible sheaves.

\begin{definition}
Let \(f\colon E\to \mathbb{A}^1_{y}\) be a morphism between
smooth algebraic varieties. We define the \emph{exponential
\(\mathscr{D}\)-module} on \(E\) to be
\begin{equation}
\exp(f):=f^\ast(\mathscr{D}_{\mathbb{A}^1_{y}}/(\partial_y-1))=
f^!(\mathscr{D}_{\mathbb{A}^1}/(\partial_y-1))[1-\dim E].
\end{equation}
This is a holonomic \(\mathscr{D}\)-module on \(E\)
which is however irregular at infinity.
\end{definition}

Let \(S\subset X\) be a (possibly singular) subscheme of \(X\) and \(\mathscr{I}_S\)
be the corresponding ideal sheaf. For a \(\mathscr{O}_X\)-module \(\mathscr{F}\) on
\(X\), we define
\begin{equation*}
  \Gamma_{[S]}(\mathscr{F}):= \varinjlim_k \mathcal{H}om_{\mathscr{O}_X}(\mathscr{O}_X/\mathscr{I}^k_S,\mathscr{F}).
\end{equation*}
The quasi-coherent \(\mathscr{O}_{X}\)-module \(\Gamma_{[S]}(\mathscr{F})\)
inherits a \(\mathscr{D}_X\)-module structure and we can consider its right
derived functor \(R\Gamma_{[S]}\). When \(\mathcal{M}\) is a complex with holonomic
cohomology sheaves, so is \(R\Gamma_{[S]}(\mathcal{M})\). Let \(j\colon X\setminus S\to
X\) be the open embedding. For \(\mathcal{M}\in \mathrm{D}^b_{\mathrm{h}}(\mathscr{D}_X)\) we have
the distinguished triangle
\begin{equation}\label{distinguished:triangle:singular:support}
  R\Gamma_{[S]}(\mathcal{M})\to \mathcal{M} \to j_+j^!\mathcal{M}\to R\Gamma_{[S]}(\mathcal{M})[1].
\end{equation}
Let \(i\colon S \to X\) be the closed embedding. In case \(S\) is \emph{smooth},
we have \(R\Gamma_{[S]}(\mathcal{M})\simeq i_+i^!\mathcal{M}\) and the
distinguished triangle \eqref{distinguished:triangle:singular:support} becomes
\begin{equation}
  i_+i^!\mathcal{M}\to \mathcal{M} \to j_+j^!\mathcal{M}\to.
\end{equation}
Therefore we shall sometimes abuse notation and use \(i_{+}i^!\) instead of
\(R\Gamma_{[S]}\) even when \(S\) is singular.
Proofs of the said results can be found in
\cite{2004-Baldassarri-DAgnolo-on-dwork-cohomology-and-algebraic-d-modules}.

\section{Reductions}
\label{sec:reductions}
In this section, based on Reichelt's result,
we demonstrate how to relate the GKZ system
\(\mathcal{M}_{A}^{\beta}\) with certain
exponentially twisted \(\mathscr{D}\)-module.
We will follow the notation set up in \S\ref{situation:a-hypergeometric-notation}.

\begin{situation}{\bf GKZ systems and Fourier--Laplace transforms}.
\label{sit:reichelt}
Since \(\beta\) is non-semiresonant, according to
\cite{2014-Reichelt-laurent-polynomials-gkz-hypergeometric-systems-and-mixed-hodge-modules}*{Proposition 1.14},
we have
  \begin{equation}
  \label{eq:fourier-gkz}
    \mathrm{FT}(\tau_{!}\mathscr{O}_{(\mathbb{C}^{\ast})^{r}\times T}^{\beta})
    = \mathcal{M}_{A,\beta}.
  \end{equation}
Here \(\mathrm{FT}\) stands for the Fourier--Laplace transform of
\(\mathscr{D}\)-modules and \(\mathscr{O}_{(\mathbb{C}^{\ast})^{r}\times T}^{\beta}\)
is the integrable connection
\begin{equation*}
\mathscr{D}_{(\mathbb{C}^{\ast})^{r}\times T}\slash
\mathscr{D}_{(\mathbb{C}^{\ast})^{r}\times T}
\langle s_{i}\partial_{s_{i}}-\beta_{i},t_{j}
\partial_{t_{j}}~|~1\le i\le r,~1\le j\le m_{i}\rangle.
\end{equation*}
\end{situation}

\begin{situation}{\bf Exponentially twisted de Rham complexes}.
\label{sit:exp-twisted-de-rham}
We explain how the Fourier--Laplace transform in
\eqref{eq:fourier-gkz} is related to
exponentially twisted de Rham complexes.

We have basepoint free line bundles \(\mathcal{L}_{1}^{-1},\ldots,
\mathcal{L}_{r}^{-1}\) and hence surjections
\begin{equation}
\label{eq:surjection}
V_{k}^{\vee}\otimes_{\mathbb{C}}\mathscr{O}_{X}\to \mathcal{L}_{k}^{-1}
\end{equation}
using the basis \(\{t^{w_{k,1}},\ldots,t^{w_{k,m_{k}}}\}\).

Denote by \(\mathbb{L}_{1},\ldots,\mathbb{L}_{r}\)
the geometric line bundles associated to \(\mathcal{L}_{1},\ldots,\mathcal{L}_{r}\).
As an algebraic variety,
\begin{equation}
\mathbb{L}_{k} = \operatorname{Spec}_{\mathscr{O}_{X}}
\operatorname{Sym}_{\mathscr{O}_{X}}^{\bullet}\mathcal{L}_{k}^{-1}.
\end{equation}
Taking \(\operatorname{Spec}_{\mathscr{O}_{X}}(-)\) on
\eqref{eq:surjection} and composing with
the projection \(V_{k}\times X\to V_{k}\), we obtain proper morphisms
\begin{equation}
\label{eq:prod-proper}
\mathbb{L}_{k}\to V_{k}.
\end{equation}
Note that \(\mathbb{L}_{k}\) acquires a toric structure
via the distinguished \(T\)-linearization.
According to the construction of \eqref{eq:surjection}, we see that
\begin{equation}
\begin{tikzcd}
&(\mathbb{C}^{\ast})^{r}\times T\ar[r,"\iota"]\ar[rr,bend right,"\tau"]
&\mathbb{L}_{1}\times_{X}\cdots\times_{X}\mathbb{L}_{r}\ar[r,"b"]
&V=V_{1}\times\cdots\times V_{r}
\end{tikzcd}
\end{equation}
is identically equal to \(\tau\). Here
\(\iota\) is the inclusion of the maximal torus in
the toric variety \(\mathbb{L}_{1}\times_{X}\cdots\times_{X}\mathbb{L}_{r}\)
while \(b\) is the product of \eqref{eq:prod-proper}.

We wish to compute \(\mathrm{FT}(\tau_{!}
\mathscr{O}_{(\mathbb{C}^{\ast})^{r}\times T})=
\mathrm{FT}(b_{!}\iota_{!}
\mathscr{O}_{(\mathbb{C}^{\ast})^{r}\times T})\).
Looking at the diagram
\begin{equation*}
\begin{tikzcd}
& & & \mathbb{A}^{1}\\
& (\mathbb{C}^{\ast})^{r}\times T\times V^{\vee}\ar[r,"\iota\times\operatorname{id}"]
\ar[d,"\mathrm{pr}"]
& \mathbb{L}_{1}\times_{X}\cdots \times_{X} \mathbb{L}_{r}\times V^{\vee}\ar[ru,"f"]
\ar[d,"\operatorname{pr}"]\ar[r,"b\times\operatorname{id}"]
& V\times V^{\vee}\ar[d,"\operatorname{pr}_{V}"]\ar[u,"F"']
\ar[r,"\operatorname{pr_{V^{\vee}}}"'] &V^{\vee} &\\
&(\mathbb{C}^{\ast})^{r}\times T\ar[r,"\iota"]
& \mathbb{L}_{1}\times_{X}\cdots \times_{X} \mathbb{L}_{r} \ar[r,"b"] &V, & &
\end{tikzcd}
\end{equation*}
by the properness of \(b\), we have
\begin{align}
\label{eq:exponential-twist}
\mathrm{FT}(b_{!}\iota_{!}
\mathscr{O}_{(\mathbb{C}^{\ast})^{r}\times T})&=
\mathrm{FT}(b_{+}\iota_{!}
\mathscr{O}_{(\mathbb{C}^{\ast})^{r}\times T})\notag\\
&=\operatorname{pr}_{V^{\vee}+}(\operatorname{pr}_{V}^{\ast}
b_{+}\iota_{!}
\mathscr{O}_{(\mathbb{C}^{\ast})^{r}\times T}\otimes \exp(F))\notag\\
&=\operatorname{pr}_{V^{\vee}+}((b\times\operatorname{id})_{+}
\operatorname{pr}^{\ast}\iota_{!}
\mathscr{O}_{(\mathbb{C}^{\ast})^{r}\times T}\otimes \exp(F))\notag\\
&=\operatorname{pr}_{V^{\vee}+}((\iota\times\operatorname{id})_{!}
\mathscr{O}_{(\mathbb{C}^{\ast})^{r}\times T\times V^{\vee}}\otimes \exp(f)).
\end{align}
Here \(F\) is the canonical pairing.
The last equality holds by base change formula
since \(\operatorname{pr}^{\ast}=\operatorname{pr}^{+}[\dim V^{\vee}] =
\operatorname{pr}^{!}[-\dim V^{\vee}]\).

We can further decompose the projection \(
\mathbb{L}_{1}\times_{X}\cdots\times_{X}\mathbb{L}_{r}\times V^{\vee}\to V^{\vee}\) into
\begin{equation*}
\mathbb{L}_{1}\times_{X}\cdots\times_{X}\mathbb{L}_{r}\times V^{\vee}
\xrightarrow{\pi\times\mathrm{id}}
X\times V^{\vee} \xrightarrow{\mathrm{pr}_{V^{\vee}}} V^{\vee}
\end{equation*}
and therefore \eqref{eq:exponential-twist} becomes
\begin{equation}
\label{eq:exp-twist-comp}
\operatorname{pr}_{V^{\vee}+}
(\pi\times\mathrm{id})_{+}((\iota\times\operatorname{id})_{!}
\mathscr{O}_{(\mathbb{C}^{\ast})^{r}\times T\times V^{\vee}}\otimes \exp(f)).
\end{equation}
\end{situation}
We can summarize the result in the following proposition.
\begin{proposition}
\label{prop:reduction}
We have an isomorphism
\begin{equation}
\mathcal{M}_{A,\beta}\cong \operatorname{pr}_{V^{\vee}+}
(\pi\times\mathrm{id})_{+}((\iota\times\operatorname{id})_{!}
\mathscr{O}^{\beta}_{(\mathbb{C}^{\ast})^{r}\times T\times V^{\vee}}\otimes \exp(f)).
\end{equation}
\end{proposition}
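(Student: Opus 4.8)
The plan is to assemble Proposition~\ref{prop:reduction} purely by chaining together the identifications already established in Situations~\ref{sit:reichelt} and \ref{sit:exp-twisted-de-rham}. First I would invoke Reichelt's result \eqref{eq:fourier-gkz}, which identifies $\mathcal{M}_{A,\beta}$ with $\mathrm{FT}(\tau_{!}\mathscr{O}^{\beta}_{(\mathbb{C}^{\ast})^{r}\times T})$; this is legitimate because the Hypothesis~\S\ref{situation:hypothesis} guarantees $\beta$ is semi-nonresonant, which is the hypothesis under which Reichelt's Proposition~1.14 applies. Next I would use the factorization $\tau = b\circ\iota$ through the toric variety $\mathbb{L}_{1}\times_{X}\cdots\times_{X}\mathbb{L}_{r}$ recorded in the commutative diagram of \S\ref{sit:exp-twisted-de-rham}, so that $\tau_{!} = b_{!}\iota_{!}$. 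Since $b$ is proper (being a product of the proper morphisms \eqref{eq:prod-proper}), we have $b_{!}=b_{+}$, and the chain of equalities in \eqref{eq:exponential-twist} already carries out the rewriting of $\mathrm{FT}(b_{+}\iota_{!}\mathscr{O}^{\beta})$ as
\[
\operatorname{pr}_{V^{\vee}+}\bigl((\iota\times\operatorname{id})_{!}\mathscr{O}^{\beta}_{(\mathbb{C}^{\ast})^{r}\times T\times V^{\vee}}\otimes\exp(f)\bigr),
\]
via the definition of $\mathrm{FT}$ as $\operatorname{pr}_{V^{\vee}+}(\operatorname{pr}_{V}^{\ast}(-)\otimes\exp(F))$, followed by the base change formula $g^{!}f_{+}=f'_{+}g'^{!}$ applied to the left-hand cartesian square, using that $\operatorname{pr}^{\ast}=\operatorname{pr}^{+}[\dim V^{\vee}]=\operatorname{pr}^{!}[-\dim V^{\vee}]$ for the smooth (hence non-characteristic) projection $\operatorname{pr}$, together with compatibility of the exponential twist with pullback, i.e. $\operatorname{pr}^{\ast}\exp(F)=\exp(F\circ(b\times\operatorname{id})\circ\operatorname{pr})=\exp(f)$.

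It remains only to recognize that the final expression in \eqref{eq:exponential-twist} coincides with \eqref{eq:exp-twist-comp} after decomposing the structure morphism $\mathbb{L}_{1}\times_{X}\cdots\times_{X}\mathbb{L}_{r}\times V^{\vee}\to V^{\vee}$ as $\operatorname{pr}_{V^{\vee}}\circ(\pi\times\operatorname{id})$, which gives $\operatorname{pr}_{V^{\vee}+}=\operatorname{pr}_{V^{\vee}+}(\pi\times\operatorname{id})_{+}$ by functoriality of proper pushforward. This is exactly the formula claimed in the proposition. So the proof is a bookkeeping argument that records the content of the two preceding Situations.

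The one point that deserves care, and is really the only obstacle, is bookkeeping of the lower-shriek versus lower-plus functors and the proper base change. One must check that the twist $\otimes\exp(f)$ commutes past the various pushforwards and pullbacks in exactly the form used — in particular that in the projection formula $\operatorname{pr}_{V}^{\ast}b_{+}(-)\otimes\exp(F)$ one may move $\operatorname{pr}_{V}^{\ast}$ inside $b_{+}$ (this is base change along the left square, valid since all varieties are smooth and the square is cartesian), and that $(b\times\operatorname{id})_{+}$ then commutes with the twist because $\exp(F)=(b\times\operatorname{id})^{\ast}$ of nothing new — rather, $f = F\circ(b\times\operatorname{id})$ so $\exp(f)=(b\times\operatorname{id})^{\ast}\exp(F)$, and the projection formula $(b\times\operatorname{id})_{+}(M\otimes(b\times\operatorname{id})^{\ast}N)\cong(b\times\operatorname{id})_{+}(M)\otimes N$ applies. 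Since $b$ is proper these projection-formula and base-change identities hold on $\mathrm{D}^{b}_{\mathrm{h}}$, and no regularity is needed at this stage (the output is genuinely irregular before the GKZ homogeneity is used later). I would therefore present the proof as: combine \eqref{eq:fourier-gkz} with the chain \eqref{eq:exponential-twist}, then rewrite via \eqref{eq:exp-twist-comp}; done.
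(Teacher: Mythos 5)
Your proposal is correct and follows essentially the same route as the paper: the paper's Proposition~\ref{prop:reduction} is explicitly presented as a summary of the computations in \S\ref{sit:reichelt} and \S\ref{sit:exp-twisted-de-rham}, namely Reichelt's identification \eqref{eq:fourier-gkz}, the factorization \(\tau=b\circ\iota\) with \(b\) proper, the base-change and projection-formula manipulations in \eqref{eq:exponential-twist}, and the final decomposition of the projection through \(X\times V^{\vee}\) giving \eqref{eq:exp-twist-comp}. Your bookkeeping of \(!\) versus \(+\) and of the twist \(\exp(f)=(b\times\operatorname{id})^{\ast}\exp(F)\) matches the paper's argument exactly.
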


\section{A local computation on direct images of
\texorpdfstring{\(\mathscr{D}\)}{D}-modules}
\label{sec:regularity}
\begin{situation}
\label{sit:direct-image-local-line}
Consider the following situation.
\begin{itemize}
\item Let \(X\) be an affine smooth algebraic variety.
\item Let \(\pi\colon X\times\mathbb{C}\to X\) be a trivial line bundle
and let \(s\) be the coordinate on \(\mathbb{C}\).
\item Let \(j\colon X\times\mathbb{C}^{\ast}\to X\times \mathbb{C}\)
be the open inclusion.
\item Let \(g\) be a regular function on \(X\).
\end{itemize}
For a parameter \(\beta\in\mathbb{C}\), we
consider the integrable connection
\(\mathscr{O}_{\mathbb{C}^{\ast}}^{\beta}\). To be precise,
\(\mathscr{O}_{\mathbb{C}^{\ast}}^{\beta}\cong\mathscr{O}_{\mathbb{C}^{\ast}}\)
as a coherent \(\mathscr{O}_{\mathbb{C}^{\ast}}\)-module and
the \(\partial_{s}\) action is given by
\begin{equation}
\partial_{s} \star s^{n} := (\beta+n) s^{n-1}.
\end{equation}
We can think of it as a connection by ``twisting \(s^{\beta}\),'' i.e.,
\(\partial_{s}\star s^{n} = s^{-\beta}\partial_{s} s^{\beta}\cdot s^{n}\).

The fiber diagram (denoting the pullback morphisms by the same notation)
\begin{equation}
\begin{tikzcd}
& &X &\\
& X\times\mathbb{C}^{\ast}\ar[r,"j"]\ar[d,"\mathrm{pr}"] &X\times\mathbb{C}
\ar[d,"\mathrm{pr}"]\ar[r,"f=sg"]\ar[u,"\pi"] &\mathbb{C}\\
& \mathbb{C}^{\ast}\ar[r,"j"] &\mathbb{C} &
\end{tikzcd}
\end{equation}
implies that
\(j_{!}\mathrm{pr}^{\ast}\mathscr{O}_{\mathbb{C}^{\ast}}^{\beta}=
\mathrm{pr}^{\ast}j_{!}\mathscr{O}_{\mathbb{C}^{\ast}}^{\beta}\).
\end{situation}

We are interested in
\(\pi_{+}(\mathrm{pr}^{\ast}j_{!}\mathscr{O}_{\mathbb{C}^{\ast}}^{\beta}\otimes\exp(f))\)
for non-integral \(\beta\).
To facilitate the computation,
let \(p\in X\) and denote \(\mathscr{O}=\mathscr{O}_{X,p}\)
and \(\mathscr{K}=\mathscr{O}[g^{-1}]\).
Choose a local coordinate system \(x=(x_{1},\ldots,x_{n})\).
Note that for \(\beta\notin\mathbb{Z}\), we have
\begin{equation}
j_{!}\mathscr{O}_{\mathbb{C}^{\ast}}^{\beta}=j_{+}\mathscr{O}_{\mathbb{C}^{\ast}}^{\beta}=
\mathbb{C}[s,\partial_{s}]\slash (s\partial_{s}-\beta+1).
\end{equation}
Then locally at
\(p\), \(\pi_{+}(\mathrm{pr}^{\ast}j_{!}\mathscr{O}_{\mathbb{C}^{\ast}}^{\beta}\otimes\exp(f))\)
is represented by the complex
\begin{equation}
\begin{tikzcd}
&\mathscr{O}[s,\partial_{s}]\slash (s\partial_{s}-\beta+1)
\ar[r,"D_{f}"]
&\mathscr{O}[s,\partial_{s}]\slash (s\partial_{s}-\beta+1),
\end{tikzcd}
\end{equation}
where \(D_{f}(u)=\partial_{s}u+(\partial f\slash\partial s)u=
\partial_{s}u + gu\). Owing to the exponential twist,
the \(\mathscr{D}_{X}\)-module structure on
\(\mathscr{O}[s,\partial_{s}]\slash (s\partial_{s}-\beta+1)\) is given by
the action
\begin{equation}
\partial_{x_{i}}\cdot u = \frac{\partial u}{\partial x_{i}} + s
\frac{\partial g}{\partial x_{i}} u.
\end{equation}

We need a few lemmas.
\begin{lemma}
If \(\beta\notin\mathbb{Z}\), the morphism
\begin{equation}
\begin{tikzcd}
&\mathscr{O}[s,\partial_{s}]\slash (s\partial_{s}-\beta+1)
\ar[r,"D_{f}"]
&\mathscr{O}[s,\partial_{s}]\slash (s\partial_{s}-\beta+1),
\end{tikzcd}
\end{equation}
is injective.
\end{lemma}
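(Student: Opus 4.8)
I want to show that the map
\[
D_{f}\colon \mathscr{O}[s,\partial_{s}]/(s\partial_{s}-\beta+1)
\longrightarrow \mathscr{O}[s,\partial_{s}]/(s\partial_{s}-\beta+1),
\qquad D_{f}(u)=\partial_{s}u+gu,
\]
is injective when $\beta\notin\mathbb{Z}$.  First I would pin down a convenient $\mathscr{O}$-module description of the source.  In the quotient $R:=\mathbb{C}[s,\partial_{s}]/(s\partial_{s}-\beta+1)$, the relation $s\partial_{s}=\beta-1$ (equivalently $\partial_{s}s=\beta$) lets one move all $\partial_{s}$'s to the left and all $s$'s to the right, and since $\beta\notin\mathbb{Z}$ no power of $s$ or $\partial_{s}$ is killed; one gets that $R$ is free over $\mathbb{C}$ with basis $\{\partial_{s}^{a}:a\ge 1\}\cup\{s^{b}:b\ge 0\}$.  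Thus $\mathscr{O}[s,\partial_{s}]/(s\partial_{s}-\beta+1)$ is a free $\mathscr{O}$-module on that same basis.  In this basis $D_{f}$ is the sum of multiplication by $g\in\mathscr{O}$ and the operator $\partial_{s}$, whose action is the $\mathbb{C}$-linear shift recorded by $\partial_{s}\cdot s^{b}=(\beta+b)s^{b-1}$ (with $s^{-1}$ read as $\partial_{s}$, since $\partial_{s}\cdot s^{0}=\partial_{s}\cdot 1 = \partial_{s}$) and $\partial_{s}\cdot\partial_{s}^{a}=\partial_{s}^{a+1}$.

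**The argument.**  Suppose $D_{f}(u)=0$ with $u\neq 0$.  Write $u=\sum_{a\ge 1}c_{a}\partial_{s}^{a}+\sum_{b\ge 0}d_{b}s^{b}$ with $c_{a},d_{b}\in\mathscr{O}$, only finitely many nonzero.  I would first dispose of the "$\partial_{s}$-part": applying $D_{f}$, the terms $c_{a}\partial_{s}^{a}$ contribute $gc_{a}\partial_{s}^{a}+c_{a}\partial_{s}^{a+1}$, and looking at the top $\partial_{s}$-degree one finds that the leading coefficient must vanish, forcing all $c_{a}=0$; so $u=\sum_{b=0}^{N}d_{b}s^{b}$ with $d_{N}\neq 0$.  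Now compute
\[
0=D_{f}(u)=\sum_{b=0}^{N} g\,d_{b}\,s^{b}+\sum_{b=0}^{N}(\beta+b)\,d_{b}\,s^{b-1},
\]
where the $b=0$ summand of the second sum is $\beta d_{0}\partial_{s}$.  Comparing coefficients of $s^{N}$ gives $g\,d_{N}=0$ in $\mathscr{O}$; since $\mathscr{O}=\mathscr{O}_{X,p}$ is a domain (or since we may invert $g$ — note $g$ is a unit in $\mathscr{K}$ and the statement is really being used after localizing) and $g\neq 0$, this forces $d_{N}=0$, a contradiction.  Hence $u=0$ and $D_{f}$ is injective.  (If $g$ vanishes identically the claim degenerates: then $D_{f}=\partial_{s}$, which is still injective on $R$ because $\beta\notin\mathbb{Z}$ makes all the shift factors $\beta+b$ nonzero; so that case is handled separately and trivially.)

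**The main obstacle.**  The only delicate point is the bookkeeping of the basis of $R=\mathbb{C}[s,\partial_{s}]/(s\partial_{s}-\beta+1)$ and keeping straight how $\partial_{s}$ and multiplication by $g$ interact with the grading by "$s$-degree minus $\partial_{s}$-degree'' — in particular that $\partial_{s}$ strictly lowers this degree (sending $s^{0}=1$ to $\partial_{s}$, i.e. degree $0$ to degree $-1$) while multiplication by $g$ preserves it.  Once one fixes the convention that the monomials $\{\partial_{s}^{a}\}_{a\ge 1}\cup\{s^{b}\}_{b\ge 0}$ are a free $\mathscr{O}$-basis and that $D_{f}$ is "diagonal plus strictly degree-lowering,'' injectivity is immediate from a leading-term argument, exactly as above.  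I expect no serious difficulty; the hypothesis $\beta\notin\mathbb{Z}$ is used precisely to guarantee that this basis is honest (no unexpected $\mathscr{O}$-torsion in $R$) and that the shift coefficients $\beta+b$ never vanish.
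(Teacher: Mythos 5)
Your proof is correct, but it takes a genuinely different route from the paper's. The paper disposes of the lemma in one line: an element of $\operatorname{Ker}(D_{f})$ would be a solution of the ordinary differential equation $\partial_{s}u+gu=0$, hence a multiple of $e^{-gs}$, which is not algebraic and so cannot lie in $\mathscr{O}[s,\partial_{s}]/(s\partial_{s}-\beta+1)$. You instead give a purely algebraic leading-term argument in the normal-form basis $\{\partial_{s}^{a}\}_{a\ge 1}\cup\{s^{b}\}_{b\ge 0}$, exploiting that $D_{f}$ is ``multiplication by $g$ plus a strictly degree-lowering shift'': the top $\partial_{s}$-coefficient kills the $\partial_{s}$-part, and then the top $s$-coefficient gives $g\,d_{N}=0$, which forces $d_{N}=0$ since $\mathscr{O}_{X,p}$ is a domain and $g\neq 0$ (with the case $g=0$ handled separately via the nonvanishing of the shift factors). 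This is more self-contained than the paper's appeal to the transcendence of exponentials, and it uses exactly the coordinates the paper itself sets up for the subsequent cokernel computations, so the two fit together well. Two minor points: in the presentation $\mathscr{D}/(s\partial_{s}-\beta+1)$ the shift is $\partial_{s}\cdot s^{b}=(\beta+b-1)s^{b-1}$ rather than $(\beta+b)s^{b-1}$ (this matches the recursions the paper writes down later), but the off-by-one is immaterial since $\beta\notin\mathbb{Z}$ keeps every such factor nonzero; and the linear independence of the monomial basis, which you assert from the normal-form reduction, deserves one more word (for instance, map the module onto $\mathbb{C}[s,s^{-1}]\,s^{\beta-1}$ and note the images are nonzero multiples of distinct powers), though the paper relies on the same fact without comment.
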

\begin{proof}
\(\operatorname{Ker}(D_{f})\) consists of exponential functions, which are
obviously non-algebraic. Hence \(D_{f}\) is injective.
\end{proof}

\begin{situation}
Let \(\mathscr{B}=\mathscr{K}\slash\mathscr{O}\).
Consider the following commutative ladder
\begin{equation*}
\begin{tikzcd}[column sep=1.5em]
&\mathscr{O}[s,\partial_{s}]\slash (s\partial_{s}-\beta+1)\ar[hookrightarrow,r]\ar[d,"D_{\mathscr{O}}=\partial_{s}+g"]
&\mathscr{K}[s,\partial_{s}]\slash (s\partial_{s}-\beta+1)\ar[twoheadrightarrow,r]\ar[d,"D_{\mathscr{K}}=\partial_{s}+g"]
&\mathscr{B}[s,\partial_{s}]\slash (s\partial_{s}-\beta+1)\ar[d,"D_{\mathscr{B}}=\partial_{s}+g"]\\
&\mathscr{O}[s,\partial_{s}]\slash (s\partial_{s}-\beta+1)\ar[hookrightarrow,r]
&\mathscr{K}[s,\partial_{s}]\slash (s\partial_{s}-\beta+1)\ar[twoheadrightarrow,r]
&\mathscr{B}[s,\partial_{s}]\slash (s\partial_{s}-\beta+1)
\end{tikzcd}
\end{equation*}
with exact rows and \(D_{\mathscr{O}}=D_{f}\) in the above diagram.
The same reason implies that both \(D_{\mathscr{K}}\) and
\(D_{\mathscr{B}}\) are injective.
The snake lemma gives the short exact sequence
\begin{equation}
\begin{tikzcd}
&0\ar[r]
&\operatorname{Coker}(D_{\mathscr{O}})\ar[r]
&\operatorname{Coker}(D_{\mathscr{K}})\ar[r]
&\operatorname{Coker}(D_{\mathscr{B}})\ar[r]
&0.
\end{tikzcd}
\end{equation}
\end{situation}

Next, we examine the cokernel of \(D_{\mathscr{K}}\).
\begin{lemma}
If \(\beta\notin\mathbb{Z}\), \(\operatorname{Coker}(D_{\mathscr{K}})\cong\mathscr{K}\)
as \(\mathscr{K}\)-modules.
\end{lemma}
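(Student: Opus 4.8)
The plan is to work with the explicit presentation of $\operatorname{Coker}(D_{\mathscr K})$ as the quotient of the free module $\mathscr K[s,\partial_s]/(s\partial_s-\beta+1)$ by the image of $D_{\mathscr K}=\partial_s+g$, and to exhibit a $\mathscr K$-module basis. First I would note that, as a $\mathscr K$-module, $M:=\mathscr K[s,\partial_s]/(s\partial_s-\beta+1)$ is free with basis $\{s^n : n\in\mathbb Z\}$: indeed, since $\beta\notin\mathbb Z$ the relation $s\partial_s=\beta-1$ lets us rewrite any monomial $s^a\partial_s^b$ in terms of the $s^n$, using $\partial_s s^n=(\beta+n)s^{n-1}$ and the invertibility of every factor $\beta+n$ to go "up" as well as "down." So $M\cong\bigoplus_{n\in\mathbb Z}\mathscr K\cdot s^n$. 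Under this identification the operator $D_{\mathscr K}$ sends $s^n\mapsto \partial_s s^n+g s^n=(\beta+n)s^{n-1}+g\,s^n$.

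The key step is then to show that modulo the image of $D_{\mathscr K}$, every $s^n$ can be reduced to a $\mathscr K$-multiple of the single generator $s^{0}=1$. From the relation $D_{\mathscr K}(s^n)=(\beta+n)s^{n-1}+g\,s^n\equiv 0$ we get $s^{n-1}\equiv -\,\tfrac{g}{\beta+n}\,s^{n}$ and, rearranging, $s^{n}\equiv -\,\tfrac{\beta+n}{g}\,s^{n-1}$; since $g$ is invertible in $\mathscr K$ and every $\beta+n$ is a nonzero scalar, both directions are legitimate $\mathscr K$-linear moves. Iterating, each $s^n$ with $n\ne 0$ becomes a $\mathscr K$-multiple of $1$, so $\operatorname{Coker}(D_{\mathscr K})$ is generated by the class of $1$ as a $\mathscr K$-module, giving a surjection $\mathscr K\twoheadrightarrow\operatorname{Coker}(D_{\mathscr K})$.

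It remains to check this surjection is injective, i.e. that no nonzero $c\cdot 1$ with $c\in\mathscr K$ lies in $\operatorname{Im}(D_{\mathscr K})$; equivalently $\mathscr K\cdot 1\cap\operatorname{Im}(D_{\mathscr K})=0$. Suppose $D_{\mathscr K}(\sum_n c_n s^n)=c\cdot 1$ with only finitely many $c_n\ne 0$. Comparing coefficients of $s^n$ for $n\ge 0$ gives a triangular system: the top nonzero coefficient forces $g\,c_{N}=0$ hence $c_N=0$ (as $\mathscr K$ is a domain), and descending we get $c_n=0$ for all $n\ge 1$, then the coefficient of $s^{0}$ reads $g\,c_0=c$ while the coefficients of $s^{-1},s^{-2},\dots$ read $(\beta+0)c_0+g c_{-1}=0$, $(\beta-1)c_{-1}+g c_{-2}=0$, and so on; since these must terminate, the same descending argument from the bottom forces $c_0=0$, whence $c=0$. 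Thus $\operatorname{Coker}(D_{\mathscr K})\cong\mathscr K$ as $\mathscr K$-modules. The main obstacle is bookkeeping the two-sided (ascending and descending) nature of the index $n$ in the free basis of $M$ and making sure the reduction and the injectivity argument both genuinely use $\beta\notin\mathbb Z$ (to invert the scalars $\beta+n$) and $g$ invertible in $\mathscr K=\mathscr O[g^{-1}]$ (to invert $g$); once those are pinned down the computation is the elementary triangular-elimination sketched above.
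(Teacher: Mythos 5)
Your proof is correct and takes essentially the same route as the paper's: both work with the explicit $\mathscr{K}$-basis of $\mathscr{K}[s,\partial_s]/(s\partial_s-\beta+1)$ (your $\{s^n\}_{n\in\mathbb{Z}}$ is the paper's $\{\partial_s^i\}_{i\ge0}\cup\{s^j\}_{j\ge1}$ up to the nonzero rescalings by $\beta+n$) and carry out the same bidiagonal elimination, using exactly the two inputs you flag, namely invertibility of $g$ in $\mathscr{K}$ and of the scalars $\beta+n$. The only cosmetic difference is that the paper packages the computation as an explicit functional $\mathfrak{L}^{\beta}$ whose kernel is the image of $D_{\mathscr{K}}$ (which it reuses afterwards), whereas you argue generation of the cokernel by $[1]$ together with injectivity via leading coefficients.
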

\begin{proof}
Note that any element \(r\in \mathscr{K}[s,\partial_{s}]\slash (s\partial_{s}-\beta+1)\)
can be written as
\begin{equation}
r \equiv \sum_{i=0}^{p} c_{i}(x)\partial_{s}^{i}+\sum_{j=1}^{q} d_{j}(x) s^{j}
\mod{s\partial_{s}-\beta+1}.
\end{equation}
Such an \(r\) belongs to the image of \(D_{\mathscr{K}}\) if
the set of equations
\begin{equation}
\label{eq:set-equations-beta}
\begin{cases}
a_{i}(x) + g(x) a_{i+1}(x) &= c_{i+1}(x),~i=0,1,2,\ldots\\
\beta b_{1}(x) + g(x) a_{0}(x) &= c_{0}(x),\\
(j+\beta)b_{j+1}(x) + g(x) b_{j}(x) &= d_{j}(x),~j=1,2,\ldots
\end{cases}
\end{equation}
has solutions \(a_{i}(x),b_{j}(x)\in\mathscr{K}\)
such that \(a_{N}(x)=0\) for \(N\gg 0\) and
\(b_{M}(x)=0\) for \(M\gg 0\).
To solve the equation, we start with \(c_{p}(x)\).
We can set \(a_{N}(x)=0\) for \(N>p\) and
\(a_{p-1}(x) = c_{p}(x)\). Utilizing the first equation, we can solve
\begin{equation}
\label{eq:a-beta}
a_{k}(x) = c_{k+1}(x) - g(x)c_{k+2}(x) + \cdots +
(-g(x))^{p-k-1} c_{p}(x),~k=0,1,2,\ldots.
\end{equation}
In particular,
\begin{equation}
a_{0}(x) = c_{1}(x) - g(x)c_{2}(x) + \cdots + (-1)^{p-1}g(x)^{p-1} c_{p}(x).
\end{equation}
Now let us look at the third equation. We again set \(b_{M}(x)=0\) for \(M>q\)
and \(g(x)b_{q}(x) = d_{q}(x)\). Since \(g\) is a unit in \(\mathscr{K}\), we can solve
\begin{equation}
b_{q}(x) = \frac{d_{q}(x)}{g(x)}.
\end{equation}
By a backward induction, using the formula
\begin{equation}
b_{j}(x) = \frac{-(j+\beta) b_{j+1}(x) +d_{j}(x)}{g(x)}=
-\frac{(j+\beta)b_{j+1}(x)}{g(x)} + \frac{d_{j}(x)}{g(x)},
\end{equation}
we can recursively solve
\begin{equation}
\label{eq:b-beta}
b_{j}(x) = \frac{\sum_{k=j}^{q} (-1)^{k-j}\Gamma(k+\beta)
d_{k}(x) g(x)^{q-k}}{\Gamma(j+\beta) g(x)^{q-j+1}}.
\end{equation}
In particular,
\begin{equation}
b_{1}(x) = \frac{\sum_{k=1}^{q} (-1)^{k-1}
\Gamma(k+\beta) d_{k}(x) g(x)^{q-k}}{\Gamma(1+\beta) g(x)^{q}}.
\end{equation}
Finally \(a_{0}(x)\) and \(b_{1}(x)\) have to obey the second equation, i.e.,
\begin{equation*}
c_{0}(x) = \frac{\beta\sum_{k=1}^{q} (-1)^{k-1}
\Gamma(k+\beta) d_{k}(x) g(x)^{q-k}}{\Gamma(1+\beta) g(x)^{q}} +
g(x)\sum_{l=1}^{p} (-1)^{l-1}g(x)^{l-1}c_{l}(x).
\end{equation*}
This is equivalent to saying that
\begin{align*}
\frac{\beta\sum_{k=1}^{q}(-1)^{k-1}
\Gamma(k+\beta) d_{k}(x) g(x)^{q-k}}{\Gamma(1+\beta) g(x)^{q}} =
\sum_{l=0}^{p} (-1)^{l}g(x)^{l}c_{l}(x).
\end{align*}
That is, the coefficients satisfy a linear equation over \(\mathscr{K}\):
\begin{equation}
-\beta\sum_{k=1}^{q} \Gamma(k+\beta)
(-g(x))^{-k} d_{k}(x) - \Gamma(1+\beta)\sum_{l=0}^{p} (-g(x))^{l}c_{l}(x)=0.
\end{equation}
Conversely, given any coefficients \(c_{i}(x)\) and \(d_{j}(x)\) satisfying the equation above,
we can always solve for \(a_{i}(x)\) and \(b_{j}(x)\) using the formulae
\eqref{eq:a-beta} and \eqref{eq:b-beta}.
This shows that \(\operatorname{Coker}(D_{2})\cong\mathscr{K}\).
\end{proof}

\begin{situation}
The computation above motivates the following definition.
For a non-zero element \(g\in\mathscr{O}\),
we define a linear functional \(\mathfrak{L}^{\beta}\colon
\mathscr{K}[s,\partial_{s}]\slash\langle s\partial_{s}-\beta+1\rangle\to \mathscr{K}\)
via
\begin{equation*}
\sum_{i=0}^{\infty} c_{i}(x)\partial_{s}^{i} +
\sum_{j=1}^{\infty} d_{j}(x)s^{j} \mapsto
\beta\sum_{k=1}^{\infty} \Gamma(k+\beta)
(-g(x))^{-k} d_{k}(x) + \Gamma(1+\beta)\sum_{l=0}^{\infty} (-g(x))^{l}c_{l}(x).
\end{equation*}
Note that this is well-defined, since \(c_{i}(x)=d_{j}(x)=0\) for \(i,j\) sufficiently large.
From the calculation, we see that \(\operatorname{Ker}(\mathfrak{L}^{\beta}) =
\operatorname{Im}(D_{\mathscr{K}})\) and
\begin{equation}
\mathfrak{L}^{\beta}\colon \operatorname{Coker}(D_{\mathscr{K}}) \to \mathscr{K}
\end{equation}
is an isomorphism.
\end{situation}

From the proof, we see that what we
need is the invertibility of \(g\). Thus, the same
calculation implies the following corollary.
\begin{corollary}
If \(g(p)\ne 0\), i.e., \(g\) is a unit in \(\mathscr{O}\), we have
\begin{equation}
\operatorname{Coker}(D_{\mathscr{O}})\cong \mathscr{O}
\end{equation}
as \(\mathscr{O}\)-modules. Moreover, the isomorphism
is induced by \(\mathfrak{L}^{\beta}\).
\end{corollary}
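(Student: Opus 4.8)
The plan is to note that the proof of the preceding lemma used nothing about $\mathscr{K}$ except that $g$ is invertible there; so if $g$ is already a unit in $\mathscr{O}$, the very same manipulations can be performed inside $\mathscr{O}[s,\partial_{s}]/\langle s\partial_{s}-\beta+1\rangle$ with $\mathscr{O}$ in place of $\mathscr{K}$.

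First I would observe that $g(p)\ne 0$ means $g$ does not lie in the maximal ideal of the local ring $\mathscr{O}=\mathscr{O}_{X,p}$, hence $g$ is a unit in $\mathscr{O}$ and $\mathscr{K}=\mathscr{O}[g^{-1}]=\mathscr{O}$. Then literally $D_{\mathscr{O}}=D_{\mathscr{K}}$, so $\operatorname{Coker}(D_{\mathscr{O}})=\operatorname{Coker}(D_{\mathscr{K}})$, and the previous lemma together with the subsequent discussion identifies this cokernel with $\mathscr{K}=\mathscr{O}$ via the functional $\mathfrak{L}^{\beta}$. That already settles the statement, including the claim that the isomorphism is induced by $\mathfrak{L}^{\beta}$.

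If one prefers a self-contained argument that does not invoke the identification $\mathscr{K}=\mathscr{O}$ formally, I would simply rerun the solvability analysis over $\mathscr{O}$. Given a class $r\equiv\sum_{i=0}^{p}c_{i}(x)\partial_{s}^{i}+\sum_{j=1}^{q}d_{j}(x)s^{j}$ with $c_{i},d_{j}\in\mathscr{O}$, formula \eqref{eq:a-beta} writes each $a_{k}(x)$ as a polynomial in $g$ with coefficients in $\mathscr{O}$, so $a_{k}\in\mathscr{O}$ with no hypothesis on $g$; formula \eqref{eq:b-beta} writes each $b_{j}(x)$ as an $\mathscr{O}$-linear combination of the $d_{k}(x)$ divided by a power of $g$, which lies in $\mathscr{O}$ precisely because $g$ is a unit in $\mathscr{O}$. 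The one remaining compatibility condition, the middle equation of \eqref{eq:set-equations-beta}, is exactly the relation $\mathfrak{L}^{\beta}(r)=0$, and the defining formula of $\mathfrak{L}^{\beta}$ visibly takes values in $\mathscr{O}$ once $g$ is invertible there. Hence $\operatorname{Ker}(\mathfrak{L}^{\beta})=\operatorname{Im}(D_{\mathscr{O}})$ and $\mathfrak{L}^{\beta}$ descends to an isomorphism $\operatorname{Coker}(D_{\mathscr{O}})\xrightarrow{\ \sim\ }\mathscr{O}$.

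Since the whole statement reduces to the computation already carried out for the lemma, there is no genuine obstacle here; the only point deserving a line of verification is that $\mathfrak{L}^{\beta}$ lands in $\mathscr{O}$ rather than merely in $\mathscr{K}$, which is immediate from its formula because the only negative powers occurring are $(-g(x))^{-k}$ and $g$ is a unit in $\mathscr{O}$.
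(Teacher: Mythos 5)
Your proposal is correct and matches the paper's own argument, which likewise observes that the lemma's computation used only the invertibility of \(g\) and therefore runs verbatim over \(\mathscr{O}\) when \(g(p)\neq 0\) (indeed \(\mathscr{K}=\mathscr{O}[g^{-1}]=\mathscr{O}\) in that case). Your extra check that \(\mathfrak{L}^{\beta}\) takes values in \(\mathscr{O}\) is the right point to verify and is exactly what the paper leaves implicit.
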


\begin{lemma}
If \(p\) is a zero of \(g(x)\), then
the map \(D_{\mathscr{B}}\) is surjective, i.e., \(\operatorname{Coker}(D_{\mathscr{B}})=0\).
Consequently, we have \(\operatorname{Coker}(D_{\mathscr{O}})\cong
\operatorname{Coker}(D_{\mathscr{K}})\).
\end{lemma}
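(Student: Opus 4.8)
The plan is to prove the surjectivity of $D_{\mathscr{B}}$ directly; the stated consequence $\operatorname{Coker}(D_{\mathscr{O}})\cong\operatorname{Coker}(D_{\mathscr{K}})$ is then immediate from the snake-lemma short exact sequence above, since $\operatorname{Coker}(D_{\mathscr{O}})$ embeds into $\operatorname{Coker}(D_{\mathscr{K}})$ with cokernel $\operatorname{Coker}(D_{\mathscr{B}})$. The structural fact I would isolate first is the replacement for ``$g$ is a unit'' that drove the computation of $\operatorname{Coker}(D_{\mathscr{K}})$: multiplication by $g$ on $\mathscr{B}=\mathscr{K}/\mathscr{O}$ is \emph{surjective} and \emph{locally nilpotent}. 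Both are clear because every class in $\mathscr{B}$ has a representative $h/g^{m}$ with $h\in\mathscr{O}$: then $g\cdot(h/g^{m+1})=h/g^{m}$ gives surjectivity, while $g^{m}\cdot(h/g^{m})=h\in\mathscr{O}$ maps to $0$, so $g^{m}$ annihilates that class. (If $g(p)\ne 0$ then $\mathscr{B}=0$ and there is nothing to do; the content is the case $g(p)=0$, where $\mathscr{B}\ne 0$ is a $g$-power-torsion module.)

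I would then rerun the bookkeeping from the proof that $\operatorname{Coker}(D_{\mathscr{K}})\cong\mathscr{K}$. Writing a general element of $\mathscr{B}[s,\partial_{s}]/(s\partial_{s}-\beta+1)$ in the normal form $\sum_{i}c_{i}\partial_{s}^{i}+\sum_{j}d_{j}s^{j}$ with $c_{i},d_{j}\in\mathscr{B}$ almost all zero, membership in $\operatorname{Im}(D_{\mathscr{B}})$ is governed by the system \eqref{eq:set-equations-beta} with $\mathscr{K}$ replaced throughout by $\mathscr{B}$. The first family $a_{i}+g a_{i+1}=c_{i+1}$ still has a unique finitely supported solution, namely \eqref{eq:a-beta}, which in particular fixes a definite $a_{0}\in\mathscr{B}$; the middle equation then \emph{determines} $b_{1}=(c_{0}-g a_{0})/\beta$, legitimately since $\beta\notin\mathbb{Z}$ forces $\beta\ne 0$; and the third family is solved by the \emph{forward} recursion $b_{j+1}=(d_{j}-g b_{j})/(j+\beta)$, legitimately since $j+\beta\ne 0$ for every $j\ge 1$. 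Thus a finitely supported solution, if it exists, is unique, and everything except its termination is purely formal.

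The termination is the only non-formal point, and the only place the hypothesis $g(p)=0$ is used. For $j$ past the support of $(d_{j})$ the recursion reads $b_{j+1}=-g b_{j}/(j+\beta)$, so from that index onward $b_{j+\ell}$ is a nonzero scalar times $g^{\ell}b_{j}$; by local nilpotence of $g$ on $\mathscr{B}$ this vanishes once $\ell$ exceeds the order to which $g$ kills $b_{j}$. Hence a finitely supported solution always exists, $D_{\mathscr{B}}$ is surjective, $\operatorname{Coker}(D_{\mathscr{B}})=0$, and the snake sequence gives $\operatorname{Coker}(D_{\mathscr{O}})\cong\operatorname{Coker}(D_{\mathscr{K}})$; the uniqueness observed above also reconfirms the injectivity of $D_{\mathscr{B}}$ asserted earlier. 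I expect no serious obstacle: the whole point is to notice that ``$g$ invertible'' should be weakened to ``$g$ locally nilpotent on $\mathscr{B}$'', and that this is precisely what makes the recursion stop.
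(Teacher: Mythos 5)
Your argument is correct and follows essentially the same route as the paper's proof: write elements in the normal form $\sum_i c_i\partial_s^i+\sum_j d_js^j$, solve the $a_i$ by backward recursion, determine $b_1$ from the middle equation, run the forward recursion for $b_{j+1}$, and observe that the tail is $g^{\ell}$ times a fixed class in $\mathscr{B}$, hence eventually zero. Your explicit isolation of the local nilpotence of multiplication by $g$ on $\mathscr{B}=\mathscr{K}/\mathscr{O}$ is exactly the (implicit) reason the paper's termination step works, so there is nothing to add.
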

\begin{proof}
Again any element \(r\in \mathscr{B}[s,\partial_{s}]\slash (s\partial_{s}-\beta+1)\)
can be written as
\begin{equation}
r \equiv \sum_{i=0}^{p} c_{i}(x)\partial_{s}^{i}+\sum_{j=1}^{q} d_{j}(x) s^{j}
\mod{s\partial_{s}-\beta+1}
\end{equation}
with \(c_{i}(x),d_{j}(x)\in\mathscr{B}\).
Such an \(r\) belongs to the image of \(D_{\mathscr{B}}\) if
there are \(a_{i}(x),b_{j}(x)\in\mathscr{B}\)
with \(a_{i}(x)=b_{j}(x)=0\) for sufficiently large \(i,j\)
solving the set of equations
\begin{equation}
\label{eq:set-equations-beta-hyperfunctions}
\begin{cases}
a_{i}(x) + g(x) a_{i+1}(x) &= c_{i+1}(x),~i=0,1,2,\ldots\\
\beta b_{1}(x) + g(x) a_{0}(x) &= c_{0}(x),\\
(j+\beta)b_{j+1}(x) + g(x) b_{j}(x) &= d_{j}(x),~j=1,2,\ldots
\end{cases}
\end{equation}
Again we begin with \(a_{i}(x)\). We can set \(a_{N}(x)=0\) for \(N>p\) and
\(a_{p-1}(x) = c_{p}(x)\). Utilizing the first equation, we can solve
\begin{equation*}
a_{k}(x) = c_{k+1}(x) - g(x)c_{k+2}(x) + \cdots + (-1)^{p-k-1}
g(x)^{p-k-1} c_{p}(x),~k=0,1,2,\ldots.
\end{equation*}
In particular,
\begin{equation*}
a_{0}(x) = c_{1}(x) - g(x)c_{2}(x) + \cdots + (-1)^{p-1}g(x)^{p-1} c_{p}(x).
\end{equation*}
Now we use the second equation to solve \(b_{1}(x)\). We get
\begin{equation}
b_{1}(x)=\frac{c_{0}(x)-g(x)a_{0}(x)}{\beta}=\beta^{-1}\sum_{k=0}^{p} (-g(x))^{k}c_{k}(x).
\end{equation}
We can continue solving \(b_{j+1}(x)\) using the third equation:
\begin{equation}
b_{j+1}(x) = \frac{d_{j}(x)-g(x)b_{j}(x)}{j+\beta}.
\end{equation}
We can easily solve
\begin{equation}
b_{j+1}(x)=\frac{(-g(x))^{j}\Gamma(1+\beta)b_{1}(x)+\sum_{k=1}^{j} \Gamma(k+\beta)(-g(x))^{j-k}
d_{k}(x)}{\Gamma(j+1+\beta)}.
\end{equation}
Since \(d_{j}(x)=0\) for all \(j\) sufficient large, we see that
\begin{equation}
b_{M+1}(x) = g(x)^{M-q}\cdot F(x)
\end{equation}
where \(F(x)\in\mathscr{B}\) is independent of \(M\) as long as \(M>q\).
Therefore, because \(g(x)\) has a zero at \(p\), we see that
\(b_{M}(x)=0\) for all \(M\) sufficiently large.
\end{proof}

\begin{situation}
Now we study the \(\mathscr{D}_{X}\)-module structure on
\(\operatorname{Coker}(D_{\mathscr{O}})\).
If \(g(p)\ne 0\), we have \(\mathfrak{L}^{\beta}(1)=\Gamma(1+\beta)\)
and
\begin{equation}
\mathfrak{L}^{\beta}(\partial_{x_{i}}\cdot 1)
=\mathfrak{L}^{\beta}(s\partial_{x_{i}}g(x))=
-\frac{\beta\Gamma(1+\beta)(\partial g/\partial x_{i})}{g(x)}.
\end{equation}
We conclude that the induced
\(\mathscr{D}_{X}\)-module structure
on \(\operatorname{Coker}(D_{\mathscr{O}})\cong \mathscr{O}\)
is given by the integrable connection
\begin{equation}
u\mapsto \mathrm{d} u + \beta \cdot \frac{\mathrm{d}g(x)}{g(x)}\wedge u.
\end{equation}
If \(p\) is a zero of \(g(x)\), the same computation shows that
the induced
\(\mathscr{D}_{X}\)-module structure
on \(\operatorname{Coker}(D_{\mathscr{O}})\cong \mathscr{K}\)
is given by the same formula (regarded as a meromorphic connection with a
pole along \(\{g=0\}\))
\begin{equation}
u\mapsto \mathrm{d} u + \beta \cdot \frac{\mathrm{d}g(x)}{g(x)}\wedge u.
\end{equation}
The argument still goes through when \(p\) is a pole of \(g\), except we should now
consider directly the complex
\begin{equation}
\begin{tikzcd}
&\mathscr{K}[s,\partial_{s}]\slash (s\partial_{s}-\beta+1)\ar[r,"D_{\mathscr{K}}"]
&\mathscr{K}[s,\partial_{s}]\slash (s\partial_{s}-\beta+1)
\end{tikzcd}
\end{equation}
instead of \(\mathscr{O}[s,\partial_{s}]\slash (s\partial_{s}-\beta+1)\),
and there will be no \(\mathscr{B}\) factor.
\end{situation}

\begin{theorem}
Under the Situation \ref{sit:direct-image-local-line},
the \(\mathscr{D}_{X}\)-module
\(\pi_{+}(\mathrm{pr}^{\ast}j_{!}\mathscr{O}_{\mathbb{C}^{\ast}}^{\beta}\otimes
\exp(f))\) is regular holonomic on \(X\).
Moreover, if \(g\) is smooth,
then its (covariant) Riemann--Hilbert partner is \(R\rho_{\ast}\mathscr{L}_{\beta}^{\vee}\)
where \(\rho\colon U\to X\) is the open embedding
of the complement \(U=X\setminus \{g=0\}\),
\(\mathscr{L}_{\beta}\) is the local system on \(U\)
whose monodromy around \(\{g=0\}\) has exponent \(\beta\),
and \(\mathscr{L}_{\beta}^{\vee}\) is its dual.
\end{theorem}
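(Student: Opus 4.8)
The plan is to assemble the local computations of this section into a single global statement and then transport it across the Riemann--Hilbert correspondence recalled in Section~\ref{sec:d-mod}. First I would record what has already been proved. By the injectivity of $D_f$, the two-term complex representing $\pi_{+}(\mathrm{pr}^{\ast}j_{!}\mathscr{O}_{\mathbb{C}^{\ast}}^{\beta}\otimes\exp(f))$ has cohomology only in degree $0$, so this object is a single holonomic $\mathscr{D}_X$-module $\mathcal{N}:=\operatorname{Coker}(D_f)$. Its stalks were computed above: at $p$ with $g(p)\neq 0$ one obtains $\mathscr{O}_{X,p}$ with the integrable connection $\nabla=\mathrm{d}+\beta\,\mathrm{d}g/g$, and at a point $p$ lying on $\{g=0\}$ (or at a pole of $g$) one obtains $\mathscr{O}_{X,p}[g^{-1}]$ with the same formula for $\nabla$, now meromorphic along $\{g=0\}$.

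Next I would identify $\mathcal{N}$ globally. Set $\rho\colon U:=X\setminus\{g=0\}\hookrightarrow X$; this is an \emph{affine} open immersion, since $\{g=0\}$ is a hypersurface. Let $\mathcal{E}=(\mathscr{O}_{U},\nabla)$ with $\nabla=\mathrm{d}+\beta\,\mathrm{d}g/g$, a well-defined rank-one integrable connection on $U$ because $\mathrm{d}g/g$ is a closed algebraic $1$-form there. Affineness of $\rho$ gives $\rho_{+}\mathcal{E}=R\rho_{\ast}\mathcal{E}$ concentrated in degree $0$. By the stalk computation $\mathcal{N}$ is $\mathscr{O}_{X}[g^{-1}]$-coherent near $\{g=0\}$ and restricts to $\mathcal{E}$ on $U$, so the adjunction map $\mathcal{N}\to\rho_{+}\rho^{+}\mathcal{N}=\rho_{+}\mathcal{E}$ is a stalkwise isomorphism, whence $\mathcal{N}\cong\rho_{+}\mathcal{E}$. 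Since $g$ is algebraic, $\mathrm{d}g/g$ has at worst logarithmic poles on any smooth normal-crossing compactification of $U$, so $\mathcal{E}$ is regular holonomic on $U$; as $\rho_{+}$ preserves regular holonomicity (Section~\ref{sec:d-mod}), $\mathcal{N}$ is regular holonomic on $X$. This proves the first assertion, with no hypothesis on $g$.

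For the second assertion, assume $\{g=0\}$ smooth and apply the de Rham functor. Since $\mathrm{dR}$ is compatible with direct images, in particular along the open immersion $\rho$, one gets $\mathrm{dR}_{X}^{\mathrm{an}}(\mathcal{N})=R\rho_{\ast}\,\mathrm{dR}_{U}^{\mathrm{an}}(\mathcal{E})$. The complex $\mathrm{dR}_{U}^{\mathrm{an}}(\mathcal{E})$ is, up to shift, the local system of flat sections of $\nabla$, which is locally generated by $g^{-\beta}$; a positive loop around $\{g=0\}$ multiplies this by $e^{-2\pi i\beta}$, so that local system is the dual $\mathscr{L}_{\beta}^{\vee}$ of the monodromy-exponent-$\beta$ local system $\mathscr{L}_{\beta}$ of the statement. (Smoothness of $\{g=0\}$ is used only to make the phrase ``monodromy around $\{g=0\}$'' unambiguous.) Hence $\mathrm{dR}_{X}^{\mathrm{an}}(\mathcal{N})\cong R\rho_{\ast}\mathscr{L}_{\beta}^{\vee}$, up to the customary shift by $\dim X$, which is the claimed covariant Riemann--Hilbert partner.

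The only genuinely non-formal ingredient is the vanishing $\operatorname{Coker}(D_{\mathscr{B}})=0$ established earlier: a priori a fibre integral $\pi_{+}$ of an exponentially twisted $\mathscr{D}$-module is irregular, and it is precisely the linearity of $f=sg$ in the fibre coordinate $s$ --- equivalently, the tameness of the critical locus of $f$ --- that forces the output to be a \emph{regular} meromorphic connection. Granting that lemma, the main obstacle is purely bookkeeping: checking that the stalkwise isomorphisms glue to a global identification $\mathcal{N}\cong\rho_{+}\mathcal{E}$, and correctly tracking the appearance of the dual local system and of the shift under $\mathrm{dR}$.
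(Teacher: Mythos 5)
Your argument for $\beta\notin\mathbb{Z}$ is correct and is essentially the paper's own route: the injectivity of $D_f$ and the local cokernel computations (in particular $\operatorname{Coker}(D_{\mathscr{B}})=0$) identify the pushforward with the single module $\rho_{+}(\mathscr{O}_U,\mathrm{d}+\beta\,\mathrm{d}g/g)$, whose regularity and de Rham complex $R\rho_{\ast}\mathscr{L}_{\beta}^{\vee}$ are then standard. In fact you make explicit the gluing and the identification with $\rho_{+}\mathcal{E}$ that the paper compresses into ``this follows from the discussion above,'' and your observation that regularity needs no smoothness of $\{g=0\}$ matches the statement.

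The one genuine gap is the case $\beta\in\mathbb{Z}$, which the theorem as stated (``Under the Situation \ref{sit:direct-image-local-line}'', where $\beta\in\mathbb{C}$ is arbitrary) still covers. There your entire mechanism fails at the first step: for integral $\beta$ one has $j_{!}\mathscr{O}_{\mathbb{C}^{\ast}}^{\beta}\neq j_{+}\mathscr{O}_{\mathbb{C}^{\ast}}^{\beta}$, the presentation $j_{!}\mathscr{O}_{\mathbb{C}^{\ast}}^{\beta}=\mathbb{C}[s,\partial_{s}]/(s\partial_{s}-\beta+1)$ is no longer valid, and the recursions in the cokernel computation divide by $\beta$ and $j+\beta$. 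Consequently the identification $\mathcal{N}\cong\rho_{+}\mathcal{E}$ is not available, and your closing claim that the first assertion is proved ``with no hypothesis on $g$'' silently also assumes $\beta\notin\mathbb{Z}$. The paper disposes of the integral case by a different device: the distinguished triangle comparing $j_{!}\mathscr{O}_{\mathbb{C}^{\ast}}$ with $\mathscr{O}_{\mathbb{C}}$ and $i_{+}i^{+}\mathscr{O}_{\mathbb{C}}$, together with the Dwork-cohomology comparison of Dimca--Maaref--Sabbah--Saito (or Baldassarri--D'Agnolo) identifying $\pi_{+}(\mathrm{pr}^{\ast}\mathscr{O}_{\mathbb{C}}\otimes\exp(f))$ with $\alpha_{+}\alpha^{!}\mathscr{O}_{X}$ for $\alpha\colon\{g=0\}\to X$. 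Since only non-integral exponents occur in the application (Theorem \ref{thm:monodormy-base} and the main theorem), the omission is peripheral to the paper's goals, but relative to the statement you were asked to prove it is a missing case requiring a separate idea.
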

\begin{proof}
For \(\beta\in\mathbb{Z}\),
the \(\mathscr{D}_{X}\)-module in question
is nothing but \(\pi_{+}(\mathrm{pr}^{\ast} j_{!}\mathscr{O}_{\mathbb{C}^{\ast}}
\otimes\exp(f))\). It is sitting in the distinguished triangle
\begin{equation*}
\pi_{+}(\mathrm{pr}^{\ast} j_{!}\mathscr{O}_{\mathbb{C}^{\ast}}
\otimes\exp(f))\to
\pi_{+}(\mathrm{pr}^{\ast} \mathscr{O}_{\mathbb{C}}
\otimes\exp(f))\to
\pi_{+}(\mathrm{pr}^{\ast} i_{+}i^{+}\mathscr{O}_{\mathbb{C}}
\otimes\exp(f)).
\end{equation*}
The third term is isomorphic to \(\mathscr{O}_{X}\)
by projection formula whereas
the second term is isomorphic to \(\alpha_{+}\alpha^{!}\mathscr{O}_{X}\)
where \(\alpha\colon \{g=0\}\to X\) is the closed embedding.
(This follows from a comparison theorem between relative de Rham cohomology and Dwork cohomology
due to Dimca \textit{et al}. See also
\cite{2000-Dimca-Maaref-Sabbah-Saito-dwork-cohomology-and-algebraic-d-modules}
or \cite{2004-Baldassarri-DAgnolo-on-dwork-cohomology-and-algebraic-d-modules}.)
The result then follows.

For \(\beta\notin\mathbb{Z}\), this follows from
the discussion above.
\end{proof}

We can generalize the results to
the case of vector bundles.
\begin{situation}
\label{sit:pushforward-vector-bundle-local}
Consider the following situation.
\begin{itemize}
\item Let \(X\) be an affine smooth algebraic
variety.
\item Let \(\pi\colon X\times\mathbb{C}^{r}\to X\) be
a trivial vector bundle of rank \(r\) and
\((s_{1},\ldots,s_{r})\) be a coordinate system on \(\mathbb{C}^{r}\).
\item Let \(j\colon X\times(\mathbb{C}^{\ast})^{r}\to X\times\mathbb{C}^{r}\)
be the inclusion.
\item Let \(g_{1},\ldots,g_{r}\) be
regular functions on \(X\).
\item Let \(\beta=(\beta_{1},\ldots,\beta_{r})\in (\mathbb{C}\setminus\mathbb{Z})^{r}\).
\end{itemize}
Consider the integrable connection
\(\mathscr{O}_{\mathbb{C}^{\ast}}^{\beta_{k}}\) on
\(\mathbb{C}\) with coordinate \(s_{k}\)
and set
\begin{equation}
\mathscr{O}_{(\mathbb{C}^{\ast})^{r}}^{\beta}=
\mathscr{O}_{\mathbb{C}^{\ast}}^{\beta_{1}}\boxtimes\cdots\boxtimes
\mathscr{O}_{\mathbb{C}^{\ast}}^{\beta_{r}}.
\end{equation}
The fiber diagram (again denoting the pullback morphisms by the same notation)
\begin{equation}
\begin{tikzcd}[column sep=5em]
& &X &\\
& X\times(\mathbb{C}^{\ast})^{r}\ar[r,"j"]\ar[d,"\mathrm{pr}"] &X\times\mathbb{C}^{r}
\ar[d,"\mathrm{pr}"]\ar[r,"f=\sum_{k=1}^{r}s_{k}g_{k}"]\ar[u,"\pi"] &\mathbb{C}\\
& (\mathbb{C}^{\ast})^{r}\ar[r,"j"] &\mathbb{C}^{r} &
\end{tikzcd}
\end{equation}
implies that
\(j_{!}\mathrm{pr}^{\ast}\mathscr{O}_{(\mathbb{C}^{\ast})^{r}}^{\beta}=
\mathrm{pr}^{\ast}j_{!}\mathscr{O}_{(\mathbb{C}^{\ast})^{r}}^{\beta}\).
In the present situation, we wish to compute the \(\mathscr{D}_{X}\)-module
\(\pi_{+}(j_{!}\mathrm{pr}^{\ast}
\mathscr{O}_{(\mathbb{C}^{\ast})^{r}}^{\beta}\otimes\exp(f))\).

Let \(p\in X\). Put \(\mathscr{O}=\mathscr{O}_{X,p}\) and
\(\mathscr{K}=\mathscr{O}[g_{1}^{-1}\cdots g_{r}^{-1}]\).
Consider for each \(1\le k\le r\) a complex of \(\mathscr{D}_{X}\)-modules
\begin{equation*}
  \mathcal{M}_{k} = \left[
  \mathscr{O}[s_{k},\partial_{s_{k}}]\slash (s_{k}\partial_{s_{k}}-\beta_{k}+1)
  \xrightarrow{D_{k}}\mathscr{O}[s_{k},\partial_{s_{k}}]
  \slash (s_{k}\partial_{s_{k}}-\beta_{k}+1)\right],
\end{equation*}
where \(D_{k}(u)=\partial_{s_{k}}u-g_{k}(x)u\) as before.
Their (derived) tensor product
\begin{equation*}
\mathcal{M}_{1}\otimes_{\mathscr{O}_{X}}^{\mathrm{L}}\cdots
\otimes_{\mathscr{O}_{X}}^{\mathrm{L}}\mathcal{M}_{r}
\end{equation*}
represents \(\pi_{+}(j_{!}\mathrm{pr}^{\ast}
\mathscr{O}_{(\mathbb{C}^{\ast})^{r}}^{\beta}\otimes\exp(f))\).
\end{situation}

The following theorem is immediate.
\begin{theorem}
\label{thm:monodormy-base}
In Situation \ref{sit:pushforward-vector-bundle-local},
the \(\mathscr{D}_{X}\)-module
\(\pi_{+}(j_{!}\mathrm{pr}^{\ast}
\mathscr{O}_{(\mathbb{C}^{\ast})^{r}}^{\beta}\otimes\exp(f))\)
is regular holonomic on \(X\).
Moreover, if \(\{g_{1}\cdots g_{r}=0\}\) is a simple normal crossing divisor,
then its (covariant) Riemann--Hilbert partner is \(R\rho_{\ast}\mathscr{L}_{\beta}^{\vee}\)
where \(\rho\colon U\to X\) is the open embedding
of the complement \(U=X\setminus \{g_{1}\cdots g_{r}=0\}\),
\(\mathscr{L}_{\beta}\) is the local system on \(U\)
whose monodromy around \(\{g_{k}=0\}\) has exponent \(\beta_{k}\),
and \(\mathscr{L}_{\beta}^{\vee}\) is its dual.
\end{theorem}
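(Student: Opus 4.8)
The plan is to deduce this from the one-variable statement established above, using the tensor-product presentation recorded in Situation~\ref{sit:pushforward-vector-bundle-local}. First I would note that each $D_{k}$ is injective---its kernel would consist of exponential functions, which are not algebraic, exactly as in the first Lemma of this section---so each complex $\mathcal{M}_{k}$ has cohomology concentrated in degree zero; set $\mathcal{N}_{k}:=\operatorname{Coker}(D_{k})$. By construction $\mathcal{N}_{k}$ is the regular holonomic $\mathscr{D}_{X}$-module $\pi_{+}(\mathrm{pr}^{\ast}j_{!}\mathscr{O}_{\mathbb{C}^{\ast}}^{\beta_{k}}\otimes\exp(s_{k}g_{k}))$ provided by the one-variable discussion (Situation~\ref{sit:direct-image-local-line} with $g=g_{k}$); since the hypothesis that $\{g_{1}\cdots g_{r}=0\}$ be a simple normal crossing divisor forces each $\{g_{k}=0\}$ to be smooth, the theorem proved above applies and identifies the Riemann--Hilbert partner of $\mathcal{N}_{k}$ with $R\rho_{k\ast}\mathscr{L}_{\beta_{k}}^{\vee}$, where $\rho_{k}\colon X\setminus\{g_{k}=0\}\hookrightarrow X$. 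As regular holonomicity is preserved under $\otimes^{\mathrm{L}}_{\mathscr{O}_{X}}$, the isomorphism $\pi_{+}(j_{!}\mathrm{pr}^{\ast}\mathscr{O}_{(\mathbb{C}^{\ast})^{r}}^{\beta}\otimes\exp(f))\cong\mathcal{N}_{1}\otimes^{\mathrm{L}}_{\mathscr{O}_{X}}\cdots\otimes^{\mathrm{L}}_{\mathscr{O}_{X}}\mathcal{N}_{r}$ already yields the first assertion.

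For the Riemann--Hilbert partner I would apply $\mathrm{dR}_{X}^{\mathrm{an}}$ to this tensor product. The de Rham functor takes the $\mathscr{O}_{X}$-module tensor product of $\mathscr{D}_{X}$-modules to the $\mathbb{C}_{X}$-module tensor product of the associated constructible complexes, up to the standard shift, so the partner in question is $R\rho_{1\ast}\mathscr{L}_{\beta_{1}}^{\vee}\otimes^{\mathrm{L}}_{\mathbb{C}_{X}}\cdots\otimes^{\mathrm{L}}_{\mathbb{C}_{X}}R\rho_{r\ast}\mathscr{L}_{\beta_{r}}^{\vee}$, and it remains to identify this with $R\rho_{\ast}\mathscr{L}_{\beta}^{\vee}$, where $\rho\colon U=X\setminus\{g_{1}\cdots g_{r}=0\}=\bigcap_{k}\bigl(X\setminus\{g_{k}=0\}\bigr)\hookrightarrow X$. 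Because $\beta_{k}\notin\mathbb{Z}$, the rank-one local system $\mathscr{L}_{\beta_{k}}$ has nontrivial monodromy around the smooth divisor $\{g_{k}=0\}$, so $R\rho_{k\ast}\mathscr{L}_{\beta_{k}}^{\vee}=\rho_{k!}\mathscr{L}_{\beta_{k}}^{\vee}$ is a single sheaf, equal to $\mathscr{L}_{\beta_{k}}^{\vee}$ on $X\setminus\{g_{k}=0\}$ and to $0$ along $\{g_{k}=0\}$. Stalkwise the derived tensor product of these sheaves either tensors free $\mathbb{C}$-modules or meets a zero stalk, hence is the ordinary tensor product: it vanishes off $U$ and restricts to $\bigotimes_{k}\mathscr{L}_{\beta_{k}}^{\vee}|_{U}=\mathscr{L}_{\beta}^{\vee}|_{U}$ on $U$, so it is $\rho_{!}\mathscr{L}_{\beta}^{\vee}$. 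Finally, a K\"{u}nneth computation in local coordinates adapted to the simple normal crossing divisor, again using that every exponent $\beta_{k}$ is non-integral, gives $R^{>0}\rho_{\ast}\mathscr{L}_{\beta}^{\vee}=0$, whence $\rho_{!}\mathscr{L}_{\beta}^{\vee}=R\rho_{\ast}\mathscr{L}_{\beta}^{\vee}$ and the identification is complete.

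The real content is entirely in the one-variable case carried out in the preceding computations; here the reduction is formal, which is why the theorem is ``immediate''. The simple normal crossing hypothesis is used in just one essential place---the last step---where it guarantees the vanishing of the higher direct images $R^{>0}\rho_{k\ast}$ and $R^{>0}\rho_{\ast}$, so that the iterated tensor product collapses to a single extension-by-zero sheaf and the local system $\mathscr{L}_{\beta}$ is unambiguously defined. I anticipate no genuine obstacle beyond keeping track of the cohomological shift in the de Rham functor when passing from $\otimes_{\mathscr{O}_{X}}$ to $\otimes_{\mathbb{C}_{X}}$, which is routine.
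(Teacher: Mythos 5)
Your proposal is correct and follows exactly the route the paper intends: the paper gives no written argument beyond the tensor-product presentation in Situation \ref{sit:pushforward-vector-bundle-local}, and your factor-by-factor reduction to the one-variable theorem, followed by the identification of the resulting tensor product of extension-by-zero sheaves, is precisely why the authors call the theorem ``immediate.'' The one step worth firming up is the assertion that \(\mathrm{dR}\) converts \(\otimes^{\mathrm{L}}_{\mathscr{O}_{X}}\) into \(\otimes^{\mathrm{L}}_{\mathbb{C}_{X}}\) --- this fails for arbitrary regular holonomic modules, but it holds here because each \(\mathcal{N}_{k}\) is locally a flat localization \(\mathscr{O}[g_{k}^{-1}]\) and the factors are pairwise non-characteristic along the normal crossing divisor; alternatively one can bypass it by noting that the tensor product is the regular meromorphic connection \(\mathrm{d}+\sum_{k}\beta_{k}\,\mathrm{d}g_{k}/g_{k}\) on \(\mathscr{O}[(g_{1}\cdots g_{r})^{-1}]\) and invoking the comparison theorem for regular meromorphic connections.
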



\section{The \texorpdfstring{\(!\)}{!}-pushforward}
\label{sec:push}
Let us resume the notation in \S\ref{sec:reductions}.
In this section, we will analyze the \(\mathscr{D}\)-module
\((\iota\times\operatorname{id})_{!}
\mathscr{O}_{(\mathbb{C}^{\ast})^{r}\times T\times V^{\vee}}\).

\begin{situation}
\label{sit:bundle-space-connection}
Let us consider the following situation.
\begin{itemize}
\item Let \(X\) be a smooth toric variety of dimension \(n\) defined by a fan \(\Sigma\).
\item Let \(T\subset X\) be the maximal torus with coordinates \((t_{1},\ldots,t_{n})\).
\item Let \(D_{\rho}\) denote the Weil divisor associated with
the one cone \(\rho\in\Sigma(1)\).
\item Let \(\mathcal{L}_{1}^{-1},\ldots,\mathcal{L}_{r}^{-1}\) be
invertible sheaves on \(X\). For each \(k\), there
are integers \(a_{\rho,k}\) (indexed by \(\rho\in\Sigma(1)\))
such that \(\mathcal{L}_{k}^{-1}\cong\mathscr{O}_{X}(\sum a_{\rho,k}D_{\rho})\).
The integers \(a_{\rho,k}\) are not unique. But
we will fix once for all a choice and hence an isomorphism
\begin{equation*}
\textstyle\mathcal{L}_{k}^{-1}\cong\mathscr{O}_{X}(\sum a_{\rho,k}D_{\rho}),~k=1,\ldots,r.
\end{equation*}
\item Let \(\mathbb{L}_{1},\ldots,\mathbb{L}_{r}\) be geometric line bundles
associated with the dual \(\mathcal{L}_{1},\ldots,\mathcal{L}_{r}\).
Explicitly, we have
\begin{equation*}
\mathbb{L}_{k} = \operatorname{Spec}_{\mathscr{O}_{X}}
\operatorname{Sym}_{\mathscr{O}_{X}}^{\bullet}\mathcal{L}_{k}^{-1}.
\end{equation*}
\end{itemize}

Under the identification \(\mathcal{L}_{k}^{-1}\cong
\mathscr{O}_{X}(\sum a_{\rho,k}D_{\rho})\),
the vector bundle \(\mathbb{L}_{1}\times_{X}\cdots\times_{X}\mathbb{L}_{r}\)
acquires a canonical toric structure which we now describe.

We need some terminology in toric geometry.
Let \(N=\mathbb{Z}^{n}\) be the lattice such that
its scalar extension \(N_{\mathbb{R}}=N\otimes_{\mathbb{Z}}\mathbb{R}\) is the
Euclidean space where \(\Sigma\) sits.
Denote by the same notation \(\rho\) the primitive generator
of the \(1\)-cone \(\rho\in\Sigma(1)\). To each \(\rho\in\Sigma(1)\) we
associate an integral vector
\begin{equation}
\tilde{\rho}:=(\rho,a_{\rho,1},\ldots,a_{\rho,r})\in N\times\mathbb{Z}^{r}.
\end{equation}
Now for any \(\sigma\in\Sigma(n)\) we define a \((r+n)\)-dimensional cone
\begin{equation}
\tilde{\sigma}:=\operatorname{Cone}(\{\tilde{\rho}~|~\rho\in\sigma(1)\}\cup
\{(\mathbf{0},e_{1}),\ldots,(\mathbf{0},e_{r})\}).
\end{equation}
Let \(\Theta\) be the collection of all \(\tilde{\sigma}\) and all
their faces. One can verify that \(\Theta\) is a fan defining
the toric variety \(\mathbb{L}_{1}\times_{X}\cdots\times_{X}\mathbb{L}_{r}\).
\end{situation}

\begin{situation}
Now assume that \(\mathrm{H}^{0}(X,\mathcal{L}_{k}^{-1})\ne 0\) for all \(k\).
It is known that
\begin{equation}
\mathrm{H}^{0}(X,\mathcal{L}_{k}^{-1})\cong \bigoplus_{m\in\Delta_{k}\cap M}
\mathbb{C}t^{m}
\end{equation}
where \(M=\mathrm{Hom}_{\mathbb{Z}}(N,\mathbb{Z})\) is the dual lattice and
\(\Delta_{k}\) is the polyhedron associated to \(\mathcal{L}_{k}^{-1}
\cong\mathscr{O}_{X}(\sum a_{\rho,k}D_{\rho})\)
\begin{equation*}
\Delta_{k}=\{m\in M_{\mathbb{R}}~|~\langle \rho,m\rangle\ge -a_{\rho,k},~
\forall\rho\in\Sigma(1)\}.
\end{equation*}
Note that a choice of the identification \(\mathcal{L}_{k}^{-1}
\cong\mathscr{O}_{X}(\sum a_{\rho,k}D_{\rho})\) uniquely determines
the polyhedron \(\Delta_{k}\).

Now assume that \(V_{k}^{\vee}\subset\mathrm{H}^{0}(X,\mathcal{L}_{k}^{-1})\)
is a subspace spanned by \(\{t^{w_{k,1}},\ldots,t^{w_{k,m_{k}}}\}
\subset \Delta_{k}\cap M\).
The set gives rise to a morphism
\(V_{k}^{\vee}\otimes_{\mathbb{C}}\mathscr{O}_{X}
\to\mathcal{L}_{k}^{-1}\)
and therefore a morphism of algebraic varieties
\(\mathbb{L}_{k}\to V_{k}\times X\).
Combined with the projection to \(V_{k}\), we obtain \(\mathbb{L}_{k}\to V_{k}\).
Consider their product
\begin{equation}
\mathbb{L}_{1}\times_{X}\cdots\times_{X}\mathbb{L}_{r} \to V:=V_{1}\times\cdots\times V_{r}.
\end{equation}
\end{situation}

We can readily check the lemma.
\begin{lemma}
\label{lem:integral-points}
Let notation be as above.
Then the composition
\begin{equation}
(\mathbb{C}^{\ast})^{r}\times T\to
\mathbb{L}_{1}\times_{X}\cdots\times_{X}\mathbb{L}_{r} \to V
\end{equation}
is given by an integral matrix \(A =
\begin{bmatrix}
A_{1} & \cdots & A_{r}
\end{bmatrix}\) with
\begin{equation}
A_{k} = \begin{bmatrix}
e_{k} & \cdots & e_{k}\\
\vline height 1ex & & \vline height 1ex\\
w_{k,1} & \cdots & w_{k,m_{k}}\\
\vline height 1ex & & \vline height 1ex
\end{bmatrix}\in\mathrm{Mat}_{(r+n)\times m_{k}}(\mathbb{Z}).
\end{equation}
\end{lemma}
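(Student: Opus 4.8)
The statement is a direct computation, and the plan is to reduce it to the dense torus of $\mathbb{L}_{1}\times_{X}\cdots\times_{X}\mathbb{L}_{r}$, where everything becomes a homomorphism of algebraic tori. All the morphisms in sight — the surjections $V_{k}^{\vee}\otimes_{\mathbb{C}}\mathscr{O}_{X}\to\mathcal{L}_{k}^{-1}$, the induced closed immersions $\mathbb{L}_{k}\hookrightarrow V_{k}\times X$, the projections to $V_{k}$, and their fibre product over $X$ — are $T$-equivariant, so the composite $(\mathbb{C}^{\ast})^{r}\times T\to\mathbb{L}_{1}\times_{X}\cdots\times_{X}\mathbb{L}_{r}\to V$ restricted to the maximal torus $(\mathbb{C}^{\ast})^{r}\times T$ (which by \S\ref{sit:bundle-space-connection} lives over $T\subset X$, the $(\mathbb{C}^{\ast})^{r}$-factor running along the fibres of $\pi$) is a map of tori, hence given by an integral $(r+n)\times m$ matrix. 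Since $(\mathbb{C}^{\ast})^{r}\times T$ is dense, it then suffices to compute the columns of that matrix block by block, i.e.\ to understand each $\mathbb{L}_{k}|_{T}\to V_{k}$.

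First I would fix the trivialisation of $\mathcal{L}_{k}^{-1}$ over $T$ determined by the chosen isomorphism $\mathcal{L}_{k}^{-1}\cong\mathscr{O}_{X}(\sum_{\rho}a_{\rho,k}D_{\rho})$: since $\sum_{\rho}a_{\rho,k}D_{\rho}$ is supported on $X\setminus T$, the rational section ``$1$'' restricts to a nowhere-vanishing section $\chi_{k}$ of $\mathcal{L}_{k}^{-1}|_{T}$, giving $\mathbb{L}_{k}|_{T}=\operatorname{Spec}_{\mathscr{O}_{T}}\operatorname{Sym}^{\bullet}\mathscr{O}_{T}=T\times\mathbb{A}^{1}$ with fibre coordinate the dual of $\chi_{k}$; one checks against the fan $\Theta$ of \S\ref{sit:bundle-space-connection} (the ray $(\mathbf{0},e_{k})$ being the zero section) that this fibre coordinate is precisely the torus character $s_{k}$, with no extra twist. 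Under the same trivialisation the standard toric identification $\mathrm{H}^{0}(X,\mathcal{L}_{k}^{-1})\cong\bigoplus_{m\in\Delta_{k}\cap M}\mathbb{C}t^{m}$ takes the global section named $t^{w_{k,j}}$ to the Laurent monomial $t^{w_{k,j}}$ on $T$ times $\chi_{k}$.

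Next I would unwind the map $\mathbb{L}_{k}|_{T}\to V_{k}$. Taking $\operatorname{Spec}_{\mathscr{O}_{X}}\operatorname{Sym}^{\bullet}(-)$ of $V_{k}^{\vee}\otimes_{\mathbb{C}}\mathscr{O}_{X}\to\mathcal{L}_{k}^{-1}$ and projecting to $V_{k}$, the coordinate $x_{k,j}$ on $V_{k}$ pulls back to the linear function on $\mathbb{L}_{k}$ given by the section $t^{w_{k,j}}$ of $\mathcal{L}_{k}^{-1}$; over $T$, by the previous paragraph, this is the function $s_{k}\,t^{w_{k,j}}$ on $T\times\mathbb{A}^{1}=\mathbb{L}_{k}|_{T}$. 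Hence $\mathbb{L}_{k}|_{T}\to V_{k}$ is the monomial map whose $j$-th component is $s_{k}t^{w_{k,j}}$, i.e.\ the monomial with exponent vector $(e_{k};w_{k,j})\in\mathbb{Z}^{r}\times\mathbb{Z}^{n}$ — exactly the $j$-th column of $A_{k}$. Assembling over $k=1,\ldots,r$ gives the block matrix $A=[A_{1}\ \cdots\ A_{r}]$ with the asserted shape; this is the map $\tau$ of \S\ref{situation:a-hypergeometric-notation}, which is injective because the columns of $A$ generate $\mathbb{Z}^{r+n}$.

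The only delicate point — and the step I would write out most carefully — is the bookkeeping in the middle paragraph: verifying that the trivialisation coming from the fixed isomorphism $\mathcal{L}_{k}^{-1}\cong\mathscr{O}_{X}(\sum a_{\rho,k}D_{\rho})$ is simultaneously compatible with the toric structure $\Theta$, so that the fibre coordinate is literally the character $s_{k}$, and with the normalisation of the $\mathrm{H}^{0}$-basis, so that the section $t^{w_{k,j}}$ really restricts to the monomial $t^{w_{k,j}}$ and no spurious shift of the exponent $w_{k,j}$ by a character creeps in. Once that is pinned down, the rest is the formal torus-equivariance reduction above.
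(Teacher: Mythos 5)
Your argument is correct, and it is the natural one: the paper in fact leaves the proof environment for this lemma empty (prefacing it with ``We can readily check the lemma''), so your writeup simply supplies the routine verification the authors omitted. The one point you rightly flag as delicate --- that the trivialization of \(\mathcal{L}_{k}^{-1}\) over \(T\) coming from the fixed divisor representative makes the fibre coordinate literally the character \(s_{k}\) --- is exactly the convention built into the fan \(\Theta\) of \S\ref{sit:bundle-space-connection} and into the paper's definition of \(\tau\) in \S\ref{situation:a-hypergeometric-notation}(5), so your computation is consistent with the paper's claim that \(b\circ\iota=\tau\).
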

\begin{proof}
\end{proof}

\begin{situation}
\label{sit:open-inclusion}
Let \(\iota\colon (\mathbb{C}^{\ast})^{r}\times T\to
\mathbb{L}_{1}\times_{X}\cdots\times_{X}\mathbb{L}_{r}\)
be the open embedding in Lemma \ref{lem:integral-points}.
As we have seen, we have to compute
the \(!\)-pushforward
\begin{equation*}
\iota_{!}\mathscr{O}_{(\mathbb{C}^{\ast})^{r}\times T}^{\beta}
\end{equation*}
where \(\mathscr{O}_{(\mathbb{C}^{\ast})^{r}\times T}^{\beta}\)
is the pullback of \(\mathscr{O}_{(\mathbb{C}^{\ast})^{r}}^{\beta}\),
the integrable connection defined in Situation
\ref{sit:pushforward-vector-bundle-local}, along
the projection \((\mathbb{C}^{\ast})^{r}\times T \to (\mathbb{C}^{\ast})^{r}\).
To this end,
we take an affine open cover \(U_{\tilde{\sigma}}\)
with \(\sigma\in\Sigma(n)\). Recall that
\begin{equation*}
\tilde{\sigma}=\operatorname{Cone}(\{\tilde{\rho}~|~\rho\in\sigma(1)\}\cup
\{(\mathbf{0},e_{1}),\ldots,(\mathbf{0},e_{r})\}).
\end{equation*}
\end{situation}

\begin{lemma}
\label{lem:coor-local}
The affine toric variety associated with \(\tilde{\sigma}\) is
the spectrum of the ring
\begin{equation*}
\mathbb{C}[\tilde{\sigma}^{\vee}\cap (M\times\mathbb{Z}^{r})]
= \mathbb{C}[t^{\nu_{\tau}},t^{\nu_{k}}s_{k}~|~\tau\in\sigma(1),~k=1,\ldots,r]
\end{equation*}
where \(\nu_{\tau}\in M\) is the element defining the facet \(\sigma(1)\setminus \{\tau\}\)
and \(\nu_{k}\in M\) is the Cartier data of \(\mathcal{L}_{k}^{-1}\) on
\(U_{\sigma}\).
Consequently, under the trivialization
\begin{equation}
\begin{tikzcd}
&\left.\mathbb{L}_{1}\times_{X}\cdots\times_{X}\mathbb{L}_{r}\right|_{U_{\sigma}}\ar[rd]
\ar[r,"\cong"]&U_{\tilde{\sigma}}\cong\mathbb{C}^{n}\times\mathbb{C}^{r}\ar[d]\\
& & U_{\sigma}\cong\mathbb{C}^{n}
\end{tikzcd}
\end{equation}
\(t^{\nu_{\tau}}\) corresponds to the coordinate on \(U_{\sigma}\)
and \(t^{\nu_{1}}s_{1},\ldots,t^{\nu_{r}}s_{r}\) correspond
to the coordinate on the fiber of \(\mathbb{L}_{1},\ldots,\mathbb{L}_{r}\).
\end{lemma}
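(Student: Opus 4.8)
The plan is to identify explicitly the semigroup ring and its dual cone associated to the cone $\tilde\sigma$. Recall that $\sigma \in \Sigma(n)$ is a smooth maximal cone, so $\sigma$ is generated by a subset of a $\mathbb{Z}$-basis of $N$; the primitive generators $\{\tau \in \sigma(1)\}$ form a basis of $N$. The dual basis in $M$ consists precisely of the vectors $\nu_\tau$ defined by the property that $\langle \tau', \nu_\tau\rangle = \delta_{\tau,\tau'}$; geometrically $\nu_\tau$ cuts out the facet $\sigma(1)\setminus\{\tau\}$. First I would write down the full set of generators of $\tilde\sigma$, namely $\{\tilde\rho = (\rho, a_{\rho,1},\dots,a_{\rho,r}) : \rho \in \sigma(1)\} \cup \{(\mathbf{0},e_k) : 1\le k\le r\}$, which is a basis of $N\times\mathbb{Z}^r$ since $\sigma$ is smooth (the change-of-basis matrix from the standard basis is unipotent-triangular, hence invertible over $\mathbb{Z}$). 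Therefore $\tilde\sigma$ is a smooth $(r+n)$-dimensional cone and $U_{\tilde\sigma} \cong \mathbb{C}^{n+r}$.

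Next I would compute the dual basis of $\{\tilde\rho\}\cup\{(\mathbf 0,e_k)\}$ in $M\times\mathbb{Z}^r$. The dual vectors split into two families: those pairing to $1$ with a single $\tilde\rho$ and to $0$ with the rest, and those pairing to $1$ with a single $(\mathbf 0,e_k)$. For the first family, the natural candidate is $(\nu_\tau, 0,\dots,0)$: indeed $\langle \tilde\rho,(\nu_\tau,\mathbf 0)\rangle = \langle\rho,\nu_\tau\rangle = \delta_{\rho,\tau}$ and $\langle(\mathbf 0,e_k),(\nu_\tau,\mathbf 0)\rangle = 0$, so this family contributes the monomials $t^{\nu_\tau}$. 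For the second family, I need $(\mu_k, \epsilon_k) \in M\times\mathbb{Z}^r$ with $\langle \tilde\rho,(\mu_k,\epsilon_k)\rangle = \langle\rho,\mu_k\rangle + a_{\rho,k}^{\mathrm{th}}\text{-component} = 0$ for all $\rho\in\sigma(1)$ and pairing $1$ with $(\mathbf 0,e_k)$, pairing $0$ with $(\mathbf 0, e_{k'})$ for $k'\ne k$. This forces $\epsilon_k = e_k$ and $\langle\rho,\mu_k\rangle = -a_{\rho,k}$ for all $\rho\in\sigma(1)$; since $\{\rho\}$ is a basis of $N$, such $\mu_k$ exists and is unique — this is exactly the Cartier datum $\nu_k$ of $\mathcal{L}_k^{-1}$ on $U_\sigma$, characterized by $\langle\rho,\nu_k\rangle = -a_{\rho,k}$. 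Hence the second family contributes the monomials $t^{\nu_k}s_k$, where the $s_k$ is the coordinate dual to $e_k$ in $\mathbb{Z}^r$. I would then conclude that $\tilde\sigma^\vee \cap (M\times\mathbb{Z}^r)$ is freely generated as a semigroup by $\{(\nu_\tau,\mathbf 0) : \tau\in\sigma(1)\} \cup \{(\nu_k,e_k) : 1\le k\le r\}$, giving the claimed presentation of the coordinate ring.

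For the final assertion about the trivialization, I would observe that the projection $\mathbb{L}_1\times_X\cdots\times_X\mathbb{L}_r \to X$ corresponds to the sublattice inclusion $N \hookrightarrow N\times\mathbb{Z}^r$, dually the projection $M\times\mathbb{Z}^r \to M$; restricted to $U_\sigma$, this sends $t^{\nu_\tau}$ to the coordinate on $U_\sigma \cong \mathbb{C}^n$ and kills the fiber directions. The monomials $t^{\nu_k}s_k$ manifestly involve $s_k$ linearly with an invertible coefficient $t^{\nu_k}$ on the torus, so they serve as fiber coordinates on the $k$-th $\mathbb{L}_k$-factor; compatibility with the geometric line bundle structure $\mathbb{L}_k = \operatorname{Spec}_{\mathscr O_X}\operatorname{Sym}^\bullet\mathcal{L}_k^{-1}$ follows because $s_k$ is a local generator of the $\mathscr O_X$-algebra and $t^{\nu_k}$ is precisely the transition factor recording the chosen isomorphism $\mathcal{L}_k^{-1}\cong\mathscr O_X(\sum a_{\rho,k}D_\rho)$ on $U_\sigma$.

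The only mildly delicate point — and the one I would be most careful about — is bookkeeping the sign conventions: whether $\nu_k$ satisfies $\langle\rho,\nu_k\rangle = a_{\rho,k}$ or $= -a_{\rho,k}$, depending on the convention relating $\mathscr O_X(D)$ to the polyhedron $\Delta_k = \{m : \langle\rho,m\rangle \ge -a_{\rho,k}\}$ fixed earlier in the excerpt. With that convention the vertex (or supporting vector) of $\Delta_k$ associated to $\sigma$ is exactly $\nu_k$ with $\langle\rho,\nu_k\rangle = -a_{\rho,k}$, so that $t^{\nu_k} \in \mathrm H^0(U_\sigma,\mathcal L_k^{-1})$ is a nowhere-vanishing local section — consistent with $t^{\nu_k}s_k$ being a genuine fiber coordinate. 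Beyond this, the proof is the standard smooth-cone computation and requires no further input.
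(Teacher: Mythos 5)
Your proposal is correct and follows essentially the same route as the paper: both exploit the smoothness of \(\tilde{\sigma}\) to identify the generators of \(\tilde{\sigma}^{\vee}\) with the dual basis of \(\{\tilde{\rho}\}\cup\{(\mathbf{0},e_{k})\}\), solving the same pairing equations \(\langle\rho,\nu\rangle+a_{\rho,k}b_{k}=0\) to obtain \((\nu_{\tau},\mathbf{0})\) and \((\nu_{k},e_{k})\). Your write-up is in fact slightly more complete, since it also spells out the verification of the trivialization statement, which the paper's proof leaves implicit.
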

\begin{proof}
Since \(\tilde{\sigma}\) is smooth of dimension \((r+n)\), its facets
are in one-to-one correspondence with
elements in \(\tilde{\sigma}(1)\). Each facet is defined by a linear
functional, and hence an element in \(M\times\mathbb{Z}^{r}\). The
collection of these elements is the generating set of \(\tilde{\sigma}^{\vee}\).
There are two cases.
\begin{itemize}
\item[(a)] The facet contains \(\tilde{\sigma}(1)\setminus \{e_{k}\}\).
\item[(b)] The facet contains \(\tilde{\sigma}(1)\setminus \{\tilde{\tau}\}\)
for some \(\tau\in \sigma(1)\).
\end{itemize}
Let us denote element in \(M\times \mathbb{Z}^{r}\) by
\((\nu,b_{1},\ldots,b_{r})\).
We deal with the case (a) first. Suppose \(e_{k}\)
is omitted. Then the equations
\begin{equation*}
\langle (\mathbf{0},e_{i}),(\nu,b_{1},\ldots,b_{r})\rangle=0,~i\ne k,
\end{equation*}
imply that \(b_{i}=0\) for \(i\ne k\).
Here \(\langle -,-\rangle\) is the canonical dual pairing
between \(N\times\mathbb{Z}^{r}\) and \(M\times\mathbb{Z}^{r}\).
Also we have \(b_{k}>0\).
For \(\rho\in\sigma(1)\) we have
\begin{equation*}
\langle (\rho,a_{\rho,1},\ldots,a_{\rho,r}),(\nu,b_{1},\ldots,b_{r})\rangle=
\langle \rho,\nu\rangle + a_{\rho,k}b_{k}=0.
\end{equation*}
Since \(\sigma\) is smooth, i.e.~the primitive generators
of \(\sigma(1)\) form a \(\mathbb{Z}\)-basis of \(N\),
we can solve for \(\nu_{k}\in M\)
from the equations.
In this case, we can further assume \(b_{k}=1\).

In case (b), we have
\begin{equation*}
\langle (\mathbf{0},e_{i}),(\nu,b_{1},\ldots,b_{r})\rangle=0,~i=1,\ldots,r,
\end{equation*}
which imply that \(b_{i}=0\) for all \(i=1,\ldots,r\).
Suppose \(\tau\in\sigma(1)\)
is omitted. We have
\begin{equation*}
\langle (\rho,a_{\rho,1},\ldots,a_{\rho,r}),(\nu,0,\ldots,0)\rangle=
\langle \rho,\nu\rangle=0,~\mbox{for}~\rho\ne\tau
\end{equation*}
from which one infers that \(\nu=\nu_{\tau}\in M\) is
the element which defines the facet of \(\sigma\)
associated with \(\sigma(1)\setminus\{\tau\}\).
\end{proof}

We rewrite the connection \(\mathscr{O}_{(\mathbb{C}^{\ast})^{r}\times T}^{\beta}\) on
\(U_{\tilde{\sigma}}\) in terms of the coordinates provided in
Lemma \ref{lem:coor-local}. 
For \(\tau\in\sigma(1)\), we put \(x_{\tau}:=t^{\nu_{\tau}}\)
and \(y_{k}=t^{\nu_{k}}s_{k}\), \(k=1,\ldots,r\).
It is also clear that
\begin{equation*}
\nu_{k} = \sum_{\tau\in\sigma(1)} -a_{\tau,k} \nu_{\tau}.
\end{equation*}
Therefore, we can solve \(t\) and \(s\) in terms of \(x_{\tau}\) and \(y_{k}\)
\begin{align*}
t_{i} &= h_{i}(x_{\tau})\\
s_{k} &= y_{k} \prod_{\tau\in\sigma(1)} x_{\tau}^{a_{\tau,k}}.
\end{align*}

\begin{proposition}
\label{prop:monodromy}
There are complex numbers \(c_{1},\ldots,c_{n}\) such that
\begin{align*}
x_{\tau}\partial_{x_{\tau}} &= \sum_{i=1}^{n} c_{i} t_{i}\partial_{t_{i}} +
\sum_{j=1}^{k} a_{\tau,j} s_{j}\partial_{s_{j}},~\tau\in\sigma(1)\\
y_{k} \partial_{y_{k}} &=s_{k}\partial_{s_{k}},~k=1,\ldots,r.
\end{align*}
Consequently, the integrable connection
\(\mathscr{O}_{(\mathbb{C}^{\ast})^{r}\times T}^{\beta}\) on
\(U_{\tilde{\sigma}}\) has monodromy whose
exponent is equal to \(\sum_{j=1}^{k} a_{\tau,j}\beta_{j}\) around \(x_{\tau}=0\)
and \(\beta_{k}\) around \(y_{k}=0\).
\end{proposition}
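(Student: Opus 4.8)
The plan is to work in the affine chart $U_{\tilde\sigma}$ fixed just before the statement, rewrite the logarithmic vector fields $x_\tau\partial_{x_\tau}$ and $y_k\partial_{y_k}$ in terms of the coordinates $(t_i,s_k)$ by a direct chain-rule computation, and then read off the monodromy by evaluating on the cyclic generator of $\mathscr{O}^{\beta}_{(\mathbb{C}^\ast)^r\times T}$.

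First I would fix $\sigma\in\Sigma(n)$ and let $c_i$ be the $i$-th entry of the primitive generator $\tau\in N=\mathbb{Z}^n$ (an integer), so that $\langle\tau,m_i\rangle=c_i$ where $(m_1,\dots,m_n)$ is the basis of $M$ with $t_i=t^{m_i}$. On the open torus $(\mathbb{C}^\ast)^r\times T\subset U_{\tilde\sigma}$ there are the two coordinate systems $(x_\rho,y_k)$ and $(t_i,s_k)$. Smoothness of $\sigma$ makes $\{\rho:\rho\in\sigma(1)\}$ a $\mathbb{Z}$-basis of $N$ with dual basis $\{\nu_\rho\}$, whence $t^m=\prod_{\rho\in\sigma(1)}x_\rho^{\langle\rho,m\rangle}$ on $U_\sigma$; together with the Cartier relation $\nu_k=-\sum_{\rho}a_{\rho,k}\nu_\rho$ recorded above this gives
\[
t_i=\prod_{\rho\in\sigma(1)}x_\rho^{\langle\rho,m_i\rangle},\qquad s_k=y_k\prod_{\rho\in\sigma(1)}x_\rho^{a_{\rho,k}}.
\]
Applying $x_\tau\partial_{x_\tau}$ and $y_k\partial_{y_k}$ to these monomials yields $x_\tau\partial_{x_\tau}t_i=c_i t_i$, $x_\tau\partial_{x_\tau}s_j=a_{\tau,j}s_j$, $y_k\partial_{y_k}t_i=0$ and $y_k\partial_{y_k}s_j=\delta_{kj}s_j$, and substituting into the chain rule $x_\tau\partial_{x_\tau}=\sum_i(x_\tau\partial_{x_\tau}t_i)\,\partial_{t_i}+\sum_j(x_\tau\partial_{x_\tau}s_j)\,\partial_{s_j}$ (and the analogous expansion of $y_k\partial_{y_k}$) produces exactly the two asserted operator identities. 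It is worth noting the conceptual content: these identities simply say that $\tilde\rho=(\rho,a_{\rho,1},\dots,a_{\rho,r})$ and $(\mathbf 0,e_k)$ form the basis of cocharacters contragredient to the basis $\{(\nu_\rho,0)\}\cup\{(\nu_k,e_k)\}$ of characters attached to the coordinates $(x_\rho,y_k)$, which one checks directly from $\langle\rho,\nu_{\rho'}\rangle=\delta_{\rho\rho'}$ and $\nu_k=-\sum_\rho a_{\rho,k}\nu_\rho$.

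Finally, let $v$ be the cyclic generator of $\mathscr{O}^{\beta}_{(\mathbb{C}^\ast)^r\times T}$, so that $t_i\partial_{t_i}v=0$ and $s_j\partial_{s_j}v=\beta_j v$ by construction (Situations \ref{sit:pushforward-vector-bundle-local} and \ref{sit:open-inclusion}). The operator identities then give $x_\tau\partial_{x_\tau}v=(\sum_{j=1}^{r}a_{\tau,j}\beta_j)\,v$ and $y_k\partial_{y_k}v=\beta_k v$. Restricting to a small punctured disc transverse to the boundary divisor $\{x_\tau=0\}$ (resp.\ $\{y_k=0\}$) inside $U_{\tilde\sigma}$, the connection becomes the rank-one local system on which $x_\tau\partial_{x_\tau}$ (resp.\ $y_k\partial_{y_k}$) acts by the scalar $\sum_j a_{\tau,j}\beta_j$ (resp.\ $\beta_k$); by definition this scalar is the monodromy exponent around that divisor, which is the claim.

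The step I expect to be the \emph{main obstacle} is not the calculation but the bookkeeping of normalizations and signs: that $\nu_\rho$ is genuinely the dual-basis element (so $\langle\rho,\nu_\rho\rangle=1$, not merely that it cuts out the facet $\sigma(1)\setminus\{\rho\}$), that the Cartier datum of $\mathcal{L}_k^{-1}=\mathscr{O}_X(\sum a_{\rho,k}D_\rho)$ on $U_\sigma$ is the $\nu_k$ with $\langle\rho,\nu_k\rangle=-a_{\rho,k}$, and consequently that it is $a_{\rho,k}$, and not $-a_{\rho,k}$, that appears as the coefficient in $x_\rho\partial_{x_\rho}$ and in the resulting monodromy exponent. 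Once these conventions are pinned down, everything reduces to the routine chain-rule computation above.
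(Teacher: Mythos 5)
Your proof is correct and follows the route the paper takes implicitly: the paper records the coordinate relations \(t_i = h_i(x_\tau)\) and \(s_k = y_k\prod_{\tau\in\sigma(1)} x_\tau^{a_{\tau,k}}\) immediately before the proposition and omits the proof, leaving exactly the chain-rule computation and the evaluation on the cyclic generator (with \(t_i\partial_{t_i}v=0\), \(s_j\partial_{s_j}v=\beta_j v\)) that you carry out. Your care with the duality conventions \(\langle\rho,\nu_{\rho'}\rangle=\delta_{\rho\rho'}\) and \(\langle\rho,\nu_k\rangle=-a_{\rho,k}\), and your identification of the \(c_i\) as the integer entries of the primitive generator \(\tau\), are consistent with the paper (whose upper limit \(\sum_{j=1}^{k}\) in the statement is evidently a typo for \(\sum_{j=1}^{r}\), as you correctly use).
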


\begin{corollary}
\label{cor:connection-one}
Assume the Situation \ref{sit:bundle-space-connection}.
Let \(\beta=(\beta_{1},\ldots,\beta_{r})\in\mathbb{C}^{r}\) and
put
\begin{itemize}
\item \(I\subset\Sigma(1)\) be the subset consisting of \(\rho\in\Sigma(1)\)
such that \(\sum_{j=1}^{k} a_{\rho,j}\beta_{j}\in\mathbb{Z}\). We
can also think of \(I\) as a subset in \(\Theta(1)\) via \(\rho\mapsto\tilde{\rho}\);
\item \(J:=\Theta(1)\setminus I\) be the complement.
\end{itemize}
(Recall that \(\Theta\) is the fan defining \(\mathbb{L}_{1}\times_{X}\cdots
\times_{X}\mathbb{L}_{r}\).)
Then the integrable connection
\(\mathscr{O}_{(\mathbb{C}^{\ast})^{r}\times T}^{\beta}\)
defined in Situation \ref{sit:open-inclusion}
can be extended to an integrable connection across
\begin{equation*}
\cup_{\rho\in I}D_{\rho}\setminus \cup_{\tau\in J} D_{\tau}.
\end{equation*}
\end{corollary}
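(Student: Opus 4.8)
The plan is to build the extension explicitly on the standard affine charts of \(Z:=\mathbb{L}_{1}\times_{X}\cdots\times_{X}\mathbb{L}_{r}\) and then to glue. Set \(W:=Z\setminus\bigcup_{\tau\in J}D_{\tau}\); this is again a smooth toric variety containing the torus \((\mathbb{C}^{\ast})^{r}\times T\), and the locus \(\bigcup_{\rho\in I}D_{\rho}\setminus\bigcup_{\tau\in J}D_{\tau}\) across which we want to extend is precisely the toric boundary \(W\setminus((\mathbb{C}^{\ast})^{r}\times T)=\bigcup_{\rho\in I}(D_{\rho}\cap W)\), which is a simple normal crossing divisor. (Each ray \((\mathbf{0},e_{k})\) lies in \(J\), so all the fibre divisors of the \(\mathbb{L}_{k}\) are removed in passing to \(W\).) Thus it suffices to extend \(\mathscr{O}^{\beta}_{(\mathbb{C}^{\ast})^{r}\times T}\) to an integrable connection on \(W\).

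First I would carry out a local computation. Fix \(\sigma\in\Sigma(n)\) and use the coordinates \(x_{\tau}=t^{\nu_{\tau}}\ (\tau\in\sigma(1))\) and \(y_{k}=t^{\nu_{k}}s_{k}\) on \(U_{\tilde\sigma}\) provided by Lemma~\ref{lem:coor-local}; on \(U_{\tilde\sigma}\cap W\) the functions \(y_{1},\dots,y_{r}\) and the \(x_{\tau}\) with \(\tau\in J\cap\sigma(1)\) are invertible. Put \(m_{\rho}:=\sum_{j=1}^{r}a_{\rho,j}\beta_{j}\), an integer precisely when \(\rho\in I\). By Proposition~\ref{prop:monodromy}, the canonical generator \(e\) of \(\mathscr{O}^{\beta}_{(\mathbb{C}^{\ast})^{r}\times T}\) satisfies \(\nabla e=\bigl(\sum_{\tau\in\sigma(1)}m_{\tau}\,\mathrm{d}\log x_{\tau}+\sum_{k}\beta_{k}\,\mathrm{d}\log y_{k}\bigr)\otimes e\). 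I would then pass to the meromorphic section \(e_{\sigma}:=\bigl(\prod_{\tau\in I\cap\sigma(1)}x_{\tau}^{-m_{\tau}}\bigr)e\), which is legitimate since the \(m_{\tau}\) with \(\tau\in I\) are integers, and observe that \(\nabla e_{\sigma}=\bigl(\sum_{\tau\in J\cap\sigma(1)}m_{\tau}\,\mathrm{d}\log x_{\tau}+\sum_{k}\beta_{k}\,\mathrm{d}\log y_{k}\bigr)\otimes e_{\sigma}\). The surviving logarithmic poles sit along divisors already removed from \(W\), so this connection form is regular on \(U_{\tilde\sigma}\cap W\); hence \(\mathscr{O}_{U_{\tilde\sigma}\cap W}\cdot e_{\sigma}\) together with \(\nabla\) is an integrable connection on \(U_{\tilde\sigma}\cap W\) restricting to \(\mathscr{O}^{\beta}\) on the torus.

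Next I would glue these local models. The exponent \(m_{\tau}\) attached to a ray \(\tau\) does not depend on the ambient maximal cone, and on an overlap \(U_{\tilde\sigma}\cap U_{\tilde\sigma'}\) two coordinates cutting out the same smooth divisor \(D_{\tau}\) differ by a unit, while if \(\tau\in\sigma(1)\setminus\sigma'(1)\) then \(D_{\tau}\) misses \(U_{\tilde\sigma'}\), so \(x_{\tau}\) is invertible on the overlap (and symmetrically with \(\sigma,\sigma'\) interchanged). Consequently \(e_{\sigma}\) and \(e_{\sigma'}\) differ by an invertible function on \(U_{\tilde\sigma}\cap U_{\tilde\sigma'}\cap W\); the sheaves \(\mathscr{O}_{W}\cdot e_{\sigma}\) therefore patch to a line bundle \(\mathscr{E}\cong\mathscr{O}_{W}(\sum_{\rho\in I}m_{\rho}D_{\rho})\) on \(W\), and \(\nabla\) patches to a pole-free connection on \(\mathscr{E}\). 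The identity \(\nabla^{2}=0\) holds on the dense open torus, hence on all of \(W\) because the curvature is a global section of a locally free sheaf. This pair \((\mathscr{E},\nabla)\) is the desired integrable connection; equivalently, it is the canonical (Deligne) extension of \(\mathscr{O}^{\beta}\) across \(\bigcup_{\rho\in I}(D_{\rho}\cap W)\) with all residues normalized to \(0\), which is permissible exactly because these residues are integers.

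The one point I expect to need real care is the gluing, i.e.~checking that the chartwise monomial twists are mutually compatible; this reduces to the toric bookkeeping indicated above, namely that the \(m_{\tau}\) are indexed by rays and that coordinates cutting out a given divisor in two charts differ by a unit. The local computation and the spreading-out of integrability from the torus are immediate from Proposition~\ref{prop:monodromy} and Lemma~\ref{lem:coor-local}.
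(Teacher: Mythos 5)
Your proof is correct and takes essentially the same route as the paper: the key input in both is Proposition \ref{prop:monodromy}, which shows that the monodromy exponents around the \(D_{\rho}\) with \(\rho\in I\) are integers. The paper stops there with a one-line conclusion, whereas you spell out the standard monomial-twist (Deligne-extension) construction and the gluing that make the extension explicit; this is a faithful elaboration rather than a different argument.
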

\begin{proof}
According to Proposition \ref{prop:monodromy},
the integrable connection \(\mathscr{O}_{(\mathbb{C}^{\ast})^{r}\times T}^{\beta}\)
has no monodromy around \(D_{\rho}\) for \(\rho\in I\).
\end{proof}

Let \(V=V_{1}\times\cdots\times V_{r}\) as before.
We can apply the construction and results to
the ``universal connection,'' i.e.~the pullback connection
\(\mathscr{O}_{(\mathbb{C}^{\ast})^{r}\times T\times V^{\vee}}^{\beta}\)
of \(\mathscr{O}_{(\mathbb{C}^{\ast})^{r}\times T}^{\beta}\) along
the projection \((\mathbb{C}^{\ast})^{r}\times T\times V^{\vee}\to
(\mathbb{C}^{\ast})^{r}\times T\).

\begin{corollary}
\label{cor:connection-universal}
Let notation be as in Corollary \ref{cor:connection-one}.
Then the integrable connection
\(\mathscr{O}_{(\mathbb{C}^{\ast})^{r}\times T\times V^{\vee}}^{\beta}\)
can be extended to an integrable connection across
\begin{equation*}
\cup_{\rho\in I}(D_{\rho}\times V^{\vee})\setminus
\cup_{\tau\in J}(D_{\tau}\times V^{\vee}).
\end{equation*}
\end{corollary}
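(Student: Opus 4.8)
The plan is to derive this immediately from Corollary~\ref{cor:connection-one} by a flat (indeed smooth) base change along the projection that forgets the $V^{\vee}$-factor. Write $p\colon (\mathbb{L}_{1}\times_{X}\cdots\times_{X}\mathbb{L}_{r})\times V^{\vee}\to \mathbb{L}_{1}\times_{X}\cdots\times_{X}\mathbb{L}_{r}$ for that projection; by construction $\mathscr{O}_{(\mathbb{C}^{\ast})^{r}\times T\times V^{\vee}}^{\beta}$ is the restriction of $p^{\ast}\mathscr{O}_{(\mathbb{C}^{\ast})^{r}\times T}^{\beta}$ to $(\mathbb{C}^{\ast})^{r}\times T\times V^{\vee}$. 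Since $V^{\vee}$ is a vector space (affine space, with no toric boundary), the product $(\mathbb{L}_{1}\times_{X}\cdots\times_{X}\mathbb{L}_{r})\times V^{\vee}$ is again a smooth toric variety, its fan being $\Theta$ crossed with the cone defining $V^{\vee}$; in particular its rays are still $\{\tilde{\rho}\mid\rho\in\Sigma(1)\}$ together with $\{(\mathbf{0},e_{k})\}$, the boundary divisor over $D_{\rho}$ is exactly $D_{\rho}\times V^{\vee}=p^{-1}(D_{\rho})$, and an affine toric chart is $U_{\tilde{\sigma}}\times V^{\vee}$ for $\sigma\in\Sigma(n)$. Thus the locus appearing in the statement is nothing but $p^{-1}$ of the locus in Corollary~\ref{cor:connection-one}, and the open subset over which we must extend is $W'=\bigl(\mathbb{L}_{1}\times_{X}\cdots\times_{X}\mathbb{L}_{r}\setminus\cup_{\tau\in J}D_{\tau}\bigr)\times V^{\vee}=p^{-1}(W)$.

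Next I would record that the monodromy is unaffected by the extra factor: pulling the identities of Proposition~\ref{prop:monodromy} back along $p$ (the coordinates on $V^{\vee}$ commute with all the vector fields and contribute nothing), the connection $\mathscr{O}_{(\mathbb{C}^{\ast})^{r}\times T\times V^{\vee}}^{\beta}$ on $U_{\tilde{\sigma}}\times V^{\vee}$ has monodromy exponent $\sum_{j}a_{\tau,j}\beta_{j}$ around $\{x_{\tau}=0\}$ for $\tau\in\sigma(1)$ and $\beta_{k}$ around $\{y_{k}=0\}$, exactly as on $U_{\tilde{\sigma}}$; in particular the monodromy around $D_{\rho}\times V^{\vee}$ is trivial for $\rho\in I$. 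A rank-one regular connection with trivial local monodromy around a smooth (or simple normal crossing) divisor extends uniquely across it — on a chart with a coordinate along the divisor the connection form simply acquires no pole — and since on $W$ the union $\cup_{\rho\in I}D_{\rho}$ is simple normal crossing (as $X$, hence $\mathbb{L}_{1}\times_{X}\cdots\times_{X}\mathbb{L}_{r}$, is smooth toric), these chart-wise extensions glue to the Deligne canonical extension already provided by Corollary~\ref{cor:connection-one}. Because $p$ is smooth, $p^{\ast}$ carries integrable connections to integrable connections and commutes with restriction to the open part; applying $p^{\ast}$ to the extension from Corollary~\ref{cor:connection-one} therefore yields an integrable connection on $W'$ restricting to $\mathscr{O}_{(\mathbb{C}^{\ast})^{r}\times T\times V^{\vee}}^{\beta}$, and uniqueness of the Deligne extension identifies it with the object one gets by the chart-wise gluing on $U_{\tilde{\sigma}}\times V^{\vee}$.

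I do not expect a genuine obstacle here; the only point deserving care — really bookkeeping — is to confirm that removing $\cup_{\tau\in J}(D_{\tau}\times V^{\vee})$ leaves a locus over which the remaining boundary divisors $D_{\rho}\times V^{\vee}$, $\rho\in I$, are such that the chart-wise extensions are compatible on overlaps, which is precisely the verification made in Corollary~\ref{cor:connection-one} now run verbatim on the charts $U_{\tilde{\sigma}}\times V^{\vee}$. Since passing to the product with the smooth affine $V^{\vee}$ alters neither the combinatorics of $\Theta$ nor any of the monodromy exponents, that verification goes through unchanged, and the corollary follows.
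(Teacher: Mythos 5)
Your proposal is correct and is essentially the paper's own argument: the paper justifies this corollary simply by observing that \(\mathscr{O}^{\beta}_{(\mathbb{C}^{\ast})^{r}\times T\times V^{\vee}}\) is the pullback of \(\mathscr{O}^{\beta}_{(\mathbb{C}^{\ast})^{r}\times T}\) along the projection forgetting the \(V^{\vee}\)-factor, so the extension of Corollary~\ref{cor:connection-one} pulls back to the required extension. Your additional chart-wise verification on \(U_{\tilde{\sigma}}\times V^{\vee}\) is harmless bookkeeping that the paper leaves implicit.
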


\section{Proof of Theorem \ref{thm:main}}
\label{sec:proof}
Given a matrix \(A\) as in Situation
\ref{situation:a-hypergeometric-notation}
and a parameter \(\beta\) as in \eqref{assumption:beta},
under the hypothesis \S\ref{situation:hypothesis}, by Proposition
\ref{prop:reduction} in Section \ref{sec:reductions}, we have
\begin{equation}
\label{eq:d-module-proof}
\mathcal{M}_{A,\beta}\cong \operatorname{pr}_{V^{\vee}+}
(\pi\times\mathrm{id})_{+}((\iota\times\operatorname{id})_{!}
\mathscr{O}^{\beta}_{(\mathbb{C}^{\ast})^{r}\times T\times V^{\vee}}\otimes \exp(f)).
\end{equation}
Here, we recall that the maps are defined through
the commutative diagram (cf.~\S\ref{sec:reductions})
\begin{equation*}
\begin{tikzcd}
& & & \mathbb{A}^{1}\\
& (\mathbb{C}^{\ast})^{r}\times T\times V^{\vee}\ar[r,"\iota\times\operatorname{id}"]
\ar[d,"\mathrm{pr}"]
& \mathbb{L}_{1}\times_{X}\cdots \times_{X} \mathbb{L}_{r}\times V^{\vee}\ar[ru,"f"]
\ar[d,"\operatorname{pr}"]\ar[r,"b\times\operatorname{id}"]
& V\times V^{\vee}\ar[d,"\operatorname{pr}_{V}"]\ar[u,"F"']
\ar[r,"\operatorname{pr_{V^{\vee}}}"'] &V^{\vee} &\\
&(\mathbb{C}^{\ast})^{r}\times T\ar[r,"\iota"]
& \mathbb{L}_{1}\times_{X}\cdots \times_{X} \mathbb{L}_{r} \ar[r,"b"] &V. & &
\end{tikzcd}
\end{equation*}
Let us concentrate on the \(\mathscr{D}\)-module
\(\mathscr{O}^{\beta}_{(\mathbb{C}^{\ast})^{r}\times T\times V^{\vee}}\).
By Corollary \ref{cor:connection-universal}, we can extend
\(\mathscr{O}^{\beta}_{(\mathbb{C}^{\ast})^{r}\times T\times V^{\vee}}\) to
an integrable connection
on
\begin{equation*}
(\mathbb{C}^{\ast})^{r}\times X(I)\times V^{\vee},\quad X(I):=X\setminus
\cup_{\tau\notin I}D_{\tau},
\end{equation*}
where the subset \(I\subset\Sigma(1)\) is defined in Corollary \ref{cor:connection-one}.

Assume that \(|I|=p\). We introduce the following notation.
\begin{itemize}
\item Let \(D_{1},\ldots,D_{p}\) be the toric divisors associated with elements in \(I\).
\item For each subset \(K\subset\{1,\ldots,p\}\), let \(D_{K}:=\cap_{k\in K} D_{k}\).
\item Let \(\mathbb{E}:=\mathbb{L}_{1}\times_{X}\cdots \times_{X} \mathbb{L}_{r}\)
and \(\mathbb{E}(I):=\left.(\mathbb{L}_{1}\times_{X}\cdots \times_{X} \mathbb{L}_{r})
\right|_{X(I)}\).
\item Denote by \(i_{q}\) the proper map
\begin{equation*}
i_{q}\colon \coprod_{|K|=q} (\mathbb{C}^{\ast})^{r}\times (D_{K}\cap X(I))
\times V^{\vee}\to (\mathbb{C}^{\ast})^{r}\times X(I)\times V^{\vee}
\end{equation*}
induced by the inclusion \(D_{K}\cap X(I)\to X(I)\).
\item Denote by \(\mathscr{O}^{\beta}_{(\mathbb{C}^{\ast})^{r}\times X(I)\times V^{\vee}}\)
the integrable connection on \((\mathbb{C}^{\ast})^{r}\times X(I)\times V^{\vee}\) extended from
\(\mathscr{O}^{\beta}_{(\mathbb{C}^{\ast})^{r}\times T\times V^{\vee}}\)
by Corollary \ref{cor:connection-universal}.
\end{itemize}
We can decompose \(\iota\times\mathrm{id}\) into three open inclusions
\begin{equation*}
\begin{tikzcd}
&(\mathbb{C}^{\ast})^{r}\times T\times V^{\vee}\ar[r,"\gamma"]
\ar[rrr,bend right,"\iota\times\mathrm{id}"]
&(\mathbb{C}^{\ast})^{r}\times X(I)\times V^{\vee}\ar[r,"j"]
&\mathbb{E}(I)\times V^{\vee}\ar[r,"\theta"]
&\mathbb{E}\times V^{\vee}.
\end{tikzcd}
\end{equation*}
We have a triangle
\begin{align}
\label{eq:resolution-of-shriek}
\begin{split}
\gamma_{!}\mathscr{O}^{\beta}_{(\mathbb{C}^{\ast})^{r}\times T\times V^{\vee}}
&\to \mathscr{O}^{\beta}_{(\mathbb{C}^{\ast})^{r}\times X(I)\times V^{\vee}}\\
&\to \left[i_{1+}i_{1}^{+}
\mathscr{O}^{\beta}_{(\mathbb{C}^{\ast})^{r}\times X(I)\times V^{\vee}}
\to\cdots\to i_{p+}i_{p}^{+}
\mathscr{O}^{\beta}_{(\mathbb{C}^{\ast})^{r}\times X(I)\times V^{\vee}}\right].
\end{split}
\end{align}
The complex in the bracket above is induced by the Mayer--Vietoris resolution
of the simple normal crossing divisor \(\cup_{i\in I} (\mathbb{C}^{\ast})^{r}\times
D_{i}\times V^{\vee}\)
restricting to \((\mathbb{C}^{\ast})^{r}\times X(I)\times V^{\vee}\).

The \(\mathscr{D}\)-module in \eqref{eq:d-module-proof} is transformed into
\begin{align}
\operatorname{pr}_{V^{\vee}+}
&(\pi\times\mathrm{id})_{+}((\iota\times\operatorname{id})_{!}
\mathscr{O}^{\beta}_{(\mathbb{C}^{\ast})^{r}\times T\times V^{\vee}}\otimes \exp(f))\notag\\
&=\operatorname{pr}_{V^{\vee}+}(\pi\times\mathrm{id})_{+}(\theta_{!}j_{!}\gamma_{!}
\mathscr{O}^{\beta}_{(\mathbb{C}^{\ast})^{r}\times T\times V^{\vee}}\otimes \exp(f)).
\end{align}
Replacing \(\gamma_{!}
\mathscr{O}^{\beta}_{(\mathbb{C}^{\ast})^{r}\times T\times V^{\vee}}\)
with the complex
\begin{equation}
\mathscr{O}^{\beta}_{(\mathbb{C}^{\ast})^{r}\times X(I)\times V^{\vee}}
\to \left[i_{1+}i_{1}^{+}
\mathscr{O}^{\beta}_{(\mathbb{C}^{\ast})^{r}\times X(I)\times V^{\vee}}
\to\cdots\to i_{p+}i_{p}^{+}
\mathscr{O}^{\beta}_{(\mathbb{C}^{\ast})^{r}\times X(I)\times V^{\vee}}\right]
\end{equation}
in \eqref{eq:resolution-of-shriek}, it is the sufficient to compute the complexes
given by
\begin{align}
\label{eq:2nd-shriek-pushforward}
\begin{split}
\operatorname{pr}_{V^{\vee}+}
&(\pi\times\mathrm{id})_{+}(\theta_{!}j_{!}
\mathscr{O}^{\beta}_{(\mathbb{C}^{\ast})^{r}\times X(I)\times V^{\vee}}\otimes \exp(f))\\
\operatorname{pr}_{V^{\vee}+}
&(\pi\times\mathrm{id})_{+}(\theta_{!}j_{!}i_{q+}i_{q}^{+}
\mathscr{O}^{\beta}_{(\mathbb{C}^{\ast})^{r}\times X(I)\times V^{\vee}}\otimes \exp(f)).
\end{split}
\end{align}
Notice that we have
\begin{equation}
\theta_{!}j_{!}\mathscr{O}^{\beta}_{(\mathbb{C}^{\ast})^{r}\times X(I)\times V^{\vee}}
= \theta_{+}j_{+}\mathscr{O}^{\beta}_{(\mathbb{C}^{\ast})^{r}\times X(I)\times V^{\vee}}
\end{equation}
due to the non-integrality of the monodromy and that
\begin{align*}
\theta_{!}j_{!}i_{q+}i_{q}^{+}
\mathscr{O}^{\beta}_{(\mathbb{C}^{\ast})^{r}\times X(I)\times V^{\vee}} &= \theta_{!}j_{!}
i_{q!}i_{q}^{+}\mathscr{O}^{\beta}_{(\mathbb{C}^{\ast})^{r}\times X(I)\times V^{\vee}}\\
&=i_{q!}\theta_{!}j_{!}i_{q}^{+}
\mathscr{O}^{\beta}_{(\mathbb{C}^{\ast})^{r}\times X(I)\times V^{\vee}}\\
&=i_{q+}\theta_{+}j_{+}i_{q}^{+}
\mathscr{O}^{\beta}_{(\mathbb{C}^{\ast})^{r}\times X(I)\times V^{\vee}}
\end{align*}
Here we have used the commutative diagram
\begin{equation*}
\begin{tikzcd}
&(\mathbb{C}^{\ast})^{r}\times (D_{K}\cap X(I))\times V^{\vee}\ar[r,"j"]\ar[d,"i_{q}"]
&\left.\mathbb{E}(I)\right|_{D_{K}}\times V^{\vee}\ar[r,"\theta"]\ar[d,"i_{q}"]
&\left.\mathbb{E}\right|_{D_{K}}\times V^{\vee}\ar[d,"i_{q}"]\\
&(\mathbb{C}^{\ast})^{r}\times X(I)\times V^{\vee}\ar[r,"j"]
&\mathbb{E}(I)\times V^{\vee}\ar[r,"\theta"]
&\mathbb{E}\times V^{\vee}
\end{tikzcd}
\end{equation*}
and the fact that both \(
\mathscr{O}^{\beta}_{(\mathbb{C}^{\ast})^{r}\times X(I)\times V^{\vee}}\)
and \(i_{q}^{+}\mathscr{O}^{\beta}_{(\mathbb{C}^{\ast})^{r}\times X(I)\times V^{\vee}}\)
have monodromies whose exponents are all non-integral (and therefore \(j_{+}=j_{!}\)
on those sheaves).

Thus, combined with the projection formula,
the equations in \eqref{eq:2nd-shriek-pushforward} become
\begin{align}
\label{eq:2nd-shriek-pushforward-reduced}
\begin{split}
&\operatorname{pr}_{V^{\vee}+}(\pi\times\mathrm{id})_{+}\theta_{+}(
j_{+}\mathscr{O}^{\beta}_{(\mathbb{C}^{\ast})^{r}\times X(I)\times V^{\vee}}
\otimes \exp(f\circ\theta))\\
&\operatorname{pr}_{V^{\vee}+}
(\pi\times\mathrm{id})_{+}i_{q+}\theta_{+}(j_{+}i_{q}^{+}
\mathscr{O}^{\beta}_{(\mathbb{C}^{\ast})^{r}\times X(I)\times V^{\vee}}
\otimes \exp(f\circ \theta\circ i_{q})).
\end{split}
\end{align}
Finally, using the commutative diagrams
\begin{equation*}
\begin{tikzcd}
&\mathbb{E}(I)\times V^{\vee}\ar[r,"\theta"]\ar[d,"\pi\times\mathrm{id}"]
&\mathbb{E}\times V^{\vee}\ar[d,"\pi\times\mathrm{id}"]
&\left.\mathbb{E}(I)\right|_{D_{K}}\times V^{\vee}\ar[r,"\theta"]\ar[d,"\pi\times\mathrm{id}"]
&\left.\mathbb{E}\right|_{D_{K}}\times V^{\vee}\ar[d,"\pi\times\mathrm{id}"]\\
&X(I)\times V^{\vee}\ar[r,"\theta"] &X\times V^{\vee}
&(D_{K}\cap X(I))\times V^{\vee}\ar[r,"\theta"] &D_{K}\times V^{\vee}
\end{tikzcd}
\end{equation*}
and the fact that they are compatible with \(i_{q}\), we see that
\eqref{eq:2nd-shriek-pushforward-reduced} becomes
\begin{align}
\label{eq:3nd-shriek-pushforward-reduced}
\begin{split}
&\operatorname{pr}_{V^{\vee}+}\theta_{+}(\pi\times\mathrm{id})_{+}(
j_{+}\mathscr{O}^{\beta}_{(\mathbb{C}^{\ast})^{r}\times X(I)\times V^{\vee}}
\otimes \exp(f\circ\theta))\\
&\operatorname{pr}_{V^{\vee}+}
i_{q+}\theta_{+}(\pi\times\mathrm{id})_{+}(j_{+}i_{q}^{+}
\mathscr{O}^{\beta}_{(\mathbb{C}^{\ast})^{r}\times X(I)\times V^{\vee}}
\otimes \exp(f\circ \theta\circ i_{q})).
\end{split}
\end{align}
Note that
\begin{equation}
i_{q}^{+}
\mathscr{O}^{\beta}_{(\mathbb{C}^{\ast})^{r}\times X(I)\times V^{\vee}}=
\bigoplus_{|K|=q}
\mathscr{O}^{\beta}_{(\mathbb{C}^{\ast})^{r}\times (D_{K}\cap X(I))\times V^{\vee}}
\end{equation}
is a direct sum of integrable connections of the same type.

Now we are in the position to apply Theorem \ref{thm:monodormy-base}
to equations in \eqref{eq:3nd-shriek-pushforward-reduced}.

The covariant Riemann--Hilbert (RH)
partner of
\begin{equation}
(\pi\times\mathrm{id})_{+}(
j_{+}\mathscr{O}^{\beta}_{(\mathbb{C}^{\ast})^{r}\times X(I)\times V^{\vee}}
\otimes \exp(f\circ\theta))
\end{equation}
is equal to
\(R\rho_{\ast}\mathscr{L}_{\beta}^{\vee}\).

Here \(\mathscr{L}_{\beta}\) is the local system on
\(U:=(X(I)\times V^{\vee})\setminus \cup_{i=1}^{r}\{g_{i}=0\}\)
having monodromy around each \(\{g_{k}=0\}\)
whose exponent is \(\beta_{k}\), \(g_{k}\in V_{k}\otimes V_{k}^{\vee}\)
is the universal section and \(\rho\colon U\to X(I)\times V^{\vee}\)
is the open inclusion.

Similarly, the RH
partner of \((\pi\times\mathrm{id})_{+}(j_{+}i_{q}^{+}
\mathscr{O}^{\beta}_{(\mathbb{C}^{\ast})^{r}\times X(I)\times V^{\vee}}
\otimes \exp(f\circ \theta\circ i_{q}))\) is
\begin{equation}
R\rho_{\ast}i_{q\ast}i_{q}^{-1}\mathscr{L}_{\beta}^{\vee}.
\end{equation}
It follows that the RH partner of
\((\pi\times\mathrm{id})_{+}(\theta_{!}j_{!}\gamma_{!}
\mathscr{O}^{\beta}_{(\mathbb{C}^{\ast})^{r}\times T\times V^{\vee}}\otimes \exp(f))\)
is quasi-isomorphic to the complex
\begin{equation}
R\theta_{\ast}
R\rho_{\ast}\left(\mathscr{L}_{\beta}^{\vee}\to\left[i_{1\ast}i_{1}^{-1}
\mathscr{L}_{\beta}^{\vee}
\to\cdots \to i_{p\ast}i_{p}^{-1}\mathscr{L}_{\beta}^{\vee}\right]\right)
\end{equation}
and the morphisms appearing above are all induced from restrictions.
Then
\begin{equation}
\mathrm{dR}_{V^{\vee}}^{\mathrm{an}}(\mathcal{M}_{A}^{\beta})\underset{qis}{\simeq}
R\mathrm{pr}_{V^{\vee}\ast}R\theta_{\ast}
R\rho_{\ast}\left(
\mathscr{L}_{\beta}^{\vee}\to\left[i_{1\ast}i_{1}^{-1}\mathscr{L}_{\beta}^{\vee}
\to\cdots \to i_{p\ast}i_{p}^{-1}\mathscr{L}_{\beta}^{\vee}\right]\right).
\end{equation}
For \(b\in V^{\vee}\), applying the Verdier duality to the complex above
and taking its stalk at \(b\), we obtain
\begin{equation}
\operatorname{Sol}^{0}(\mathcal{M}_{A,\beta})_{b}
\cong \mathrm{H}_{n}(U_{b},U_{b}\cap(\cup_{i\in I} D_{i}),\mathscr{L}_{\beta,b})
\end{equation}
where \(U_{b}=X(I)\setminus \cup_{i=1}^{r}\{g_{i,b}=0\}\) and \(g_{i,b}\)
is the restriction of the universal section \(g_{i}\in V_{i}\otimes V_{i}^{\vee}\) to \(b\).
This completes the proof.

\begin{bibdiv}
\begin{biblist}

\bib{2004-Baldassarri-DAgnolo-on-dwork-cohomology-and-algebraic-d-modules}{incollection}{
      author={Baldassarri, Francesco},
      author={D'Agnolo, Andrea},
       title={On {D}work cohomology and algebraic {$\mathscr D$}-modules},
        date={2004},
   booktitle={Geometric aspects of {D}work theory. {V}ol. {I}, {II}},
   publisher={Walter de Gruyter, Berlin},
       pages={245\ndash 253},
      review={\MR{2023291}},
}

\bib{2000-Dimca-Maaref-Sabbah-Saito-dwork-cohomology-and-algebraic-d-modules}{article}{
      author={Dimca, Alexandru},
      author={Maaref, Fay\c{c}al},
      author={Sabbah, Claude},
      author={Saito, Morihiko},
       title={{Dwork cohomology and algebraic $\mathscr{D}$-module}},
        date={2000},
     journal={Math. Ann.},
      volume={318},
      number={1},
       pages={107\ndash 125},
}

\bib{1987-Gelfand-Graev-Zelevinski-holonomic-systems-of-equations-and-series-of-hypergeometric-type}{article}{
      author={Gel'fand, Israel~M.},
      author={Graev, Mark~I.},
      author={Zelevinski\u{\i}, Andrei~V.},
       title={Holonomic systems of equations and series of hypergeometric
  type},
        date={1987},
        ISSN={0002-3264},
     journal={Dokl. Akad. Nauk SSSR},
      volume={295},
      number={1},
       pages={14\ndash 19},
         url={https://mathscinet.ams.org/mathscinet-getitem?mr=902936},
      review={\MR{902936}},
}

\bib{1990-Gelfand-Kapranov-Zelevinsky-generalized-euler-integrals-and-a-hypergeometric-functions}{article}{
      author={Gel'fand, Israel~M.},
      author={Kapranov, Mikhail~M.},
      author={Zelevinski\u{\i}, Andrei~V.},
       title={Generalized {E}uler integrals and {$A$}-hypergeometric
  functions},
        date={1990},
        ISSN={0001-8708},
     journal={Adv. Math.},
      volume={84},
      number={2},
       pages={255\ndash 271},
         url={https://mathscinet.ams.org/mathscinet-getitem?mr=1080980},
      review={\MR{1080980}},
}

\bib{1989-Gelfand-Kapranov-Zelevinski-hypergeometric-functions-and-toral-manifolds}{article}{
      author={Gel'fand, Israel~M.},
      author={Zelevinski\u{\i}, Andrei~V.},
      author={Kapranov, Mikhail~M.},
       title={Hypergeometric functions and toric varieties},
        date={1989},
        ISSN={0374-1990},
     journal={Funktsional. Anal. i Prilozhen.},
      volume={23},
      number={2},
       pages={12\ndash 26},
         url={https://mathscinet.ams.org/mathscinet-getitem?mr=1011353},
      review={\MR{1011353}},
}

\bib{2022-Gorlach-Reichelt-Sevenheck-Steiner-Walther-tautological-systems-homogeneous-spaces-and-the-holonomic-rank-problem}{article}{
      author={G\"{o}rlach, Paul},
      author={Reichelt, Thomas},
      author={Sevenheck, Christian},
      author={Steiner, Avi},
      author={Walther, Uli},
       title={Tautological systems, homogeneous spaces and the holonomic rank
  problem},
        date={202211},
      eprint={https://arxiv.org/pdf/2211.05356.pdf},
         url={https://arxiv.org/pdf/2211.05356.pdf},
}

\bib{2020-Hosono-Lian-Takagi-Yau-k3-surfaces-from-configurations-of-six-lines-in-p2-and-mirror-symmetry-i}{article}{
      author={Hosono, Shinobu},
      author={Lian, Bong~H.},
      author={Takagi, Hiromichi},
      author={Yau, Shing-Tung},
       title={K3 surfaces from configurations of six lines in {$\mathbb{P}^2$}
  and mirror symmetry {I}},
        date={2020},
        ISSN={1931-4523},
     journal={Commun. Number Theory Phys.},
      volume={14},
      number={4},
       pages={739\ndash 783},
      review={\MR{4164174}},
}

\bib{2019-Hosono-Lian-Yau-k3-surfaces-from-configurations-of-six-lines-in-p2-and-mirror-symmetry-ii-lambda-k3-functions}{article}{
      author={Hosono, Shinobu},
      author={Lian, Bong~H.},
      author={Yau, Shing-Tung},
       title={{K3 surfaces from configurations of six lines in
  \(\mathbb{P}^{2}\) and mirror symmetry II -- \(\lambda_{K3}\)-functions}},
        date={2019},
        ISSN={1073-7928},
     journal={Int. Math. Res. Not. IMRN},
  eprint={https://academic.oup.com/imrn/advance-article-pdf/doi/10.1093/imrn/rnz259/30788308/rnz259.pdf},
         url={https://doi.org/10.1093/imrn/rnz259},
        note={rnz259},
}

\bib{2022-Lee-a-note-on-periods-of-calabi-yau-fractional-complete-intersections}{article}{
      author={Lee, Tsung-Ju},
       title={A note on periods of {C}alabi--{Y}au fractional complete
  intersections},
        date={202204},
      eprint={https://arxiv.org/pdf/2204.10474.pdf},
         url={https://arxiv.org/pdf/2204.10474.pdf},
}

\bib{2020-Lee-Zhang-a-hypergeometric-systems-and-relative-cohomology}{article}{
      author={Lee, Tsung-Ju},
      author={Zhang, Dingxin},
       title={{$A$}-hypergeometric systems and relative cohomology},
        date={2020},
        ISSN={0129-167X},
     journal={Internat. J. Math.},
      volume={31},
      number={13},
       pages={2050113, 21},
      review={\MR{4192455}},
}

\bib{2013-Lian-Song-Yau-periodic-integrals-and-tautological-systems}{article}{
      author={Lian, Bong~H.},
      author={Song, Ruifang},
      author={Yau, Shing-Tung},
       title={Periodic integrals and tautological systems},
        date={2013},
        ISSN={1435-9855},
     journal={J. Eur. Math. Soc. (JEMS)},
      volume={15},
      number={4},
       pages={1457\ndash 1483},
      review={\MR{3055766}},
}

\bib{2014-Reichelt-laurent-polynomials-gkz-hypergeometric-systems-and-mixed-hodge-modules}{article}{
      author={Reichelt, Thomas},
       title={Laurent polynomials, {GKZ}-hypergeometric systems and mixed
  {H}odge modules},
        date={2014},
        ISSN={0010-437X},
     journal={Compositio Math.},
      volume={150},
      number={6},
       pages={911\ndash 941},
      review={\MR{3223877}},
}

\bib{2001-Saito-isomorphic-classes-of-a-hypergeometric-systems}{article}{
      author={Saito, Mutsumi},
       title={Isomorphism classes of {$A$}-hypergeometric systems},
        date={2001},
        ISSN={0010-437X},
     journal={Compositio Math.},
      volume={128},
      number={3},
       pages={323\ndash 338},
  url={https://mathscinet-ams-org.ezp-prod1.hul.harvard.edu/mathscinet-getitem?mr=1858340},
      review={\MR{1858340}},
}

\end{biblist}
\end{bibdiv}
\end{document}